\newcommand{\R}{\mathbb R}
\newcommand{\Z}{\mathbb Z}
\newcommand{\N}{\mathbb N}
\def\Re{ \mathrm{Re}\, }
\newcommand{\eps}{\epsilon}
\newcommand{\peq}{\hspace*{0.10in}}
\newcommand{\peqq}{\hspace*{0.05in}}
\newcommand{\ra}{\rightarrow}
\newtheorem{theorem}{Theorem}[section]
\newtheorem{proposition}[theorem]{Proposition}
\newtheorem{remark}[theorem]{Remark}
\newtheorem{lemma}[theorem]{Lemma}
\newtheorem{definition}[theorem]{Definition}
\begin{document}
\vglue-1cm \hskip1cm
\title[lack of compactness and existence of maximizers]{On the lack of compactness and existence of maximizers for some Airy-Strichartz inequalities}



\begin{abstract}
This work is devoted to prove a linear profile decomposition for the Airy equation in $\dot{H}_x^{s_k}(\R)$, where $s_k=(k-4)/2k$ and $k>4$. We also apply this decomposition to establish the existence of maximizers for a general class of Strichartz type inequalities associated to the Airy equation.



\end{abstract}


\author[L. G. Farah]{Luiz G. Farah}
\address{ICEx, Universidade Federal de Minas Gerais, Av. Ant\^onio Carlos, 6627, Caixa Postal 702, 30123-970,
Belo Horizonte-MG, Brazil}
\email{lgfarah@gmail.com}

\author[H. Versieux]{Henrique Versieux}
\address{Instituto de Matem\'atica, Universidade Federal do Rio de Janeiro, Av. Athos da Silveira Ramos 149 - Ilha do Fundão. Caixa Postal 68530, 21941-909, Rio de Janeiro-RJ, Brazil}
\email{henrique@im.ufrj.br}


\maketitle

\section{Profile Decomposition}

In this paper, we investigate the defect of compactness in some space-time estimates enjoyed by the solutions of the Airy equation

\begin{equation}\label{Lkdv}
\begin{cases}
\partial_t u+\partial_x^3u=0, \;\;x\in\R, \;t>0, \\
u(x,0)=u_0(x).
\end{cases}
\end{equation}

We denote the unique solution of \eqref{Lkdv} via the propagator $u(x,t)=U(t)u_0(x)$. Notice that the solution is globally defined in the homogeneous Sobolev space $\dot{H}^s(\R)$, for all $s\in \R$. Moreover, $\{U(t)\}_{t\in \R}$ defines a unitary operator in these homogeneous Sobolev spaces. In particular, we have for all $s\in \R$
\begin{equation}\label{HSP}
\|U(t)u_0\|_{\dot{H}_x^s}=\|u_0\|_{\dot{H}_x^s}, \peq \textrm{ for all } t\in \R.
\end{equation}

Mixed Lebesgue norms $L^p_tL^q_x$ estimates, so called the Strichartz estimates, are the fundamental tools to obtain well-posedness for  nonlinear dispersive problems, such as the
 supercritical generalized Korteweg-de Vries (gKdV) equation
\begin{equation}\label{gkdv}
\begin{cases}
\partial_t u+\partial_x^3u+\partial_x(u^{k+1})=0, \;\;x\in\R, \;t>0, \;k\geq 4\\
u(x,0)=u_0(x).
\end{cases}
\end{equation}
Studying oscillatory integrals and their relationship with smoothing properties of dispersive equations, Kenig, Ponce and Vega\cite[Theorem 2.1]{KPV4} proved, among other general results, that the solution of the linear problem \eqref{Lkdv} satisfies the following estimate
\begin{equation}\label{reg1}
 \|D^{\alpha}_xU(t)u_0\|_{L_t^pL_x^q}\leq C_{p,q,\alpha} \|u_0\|_{L^2_x},
\end{equation}
if the condition
\begin{equation*}
-\alpha+\dfrac{1}{q}+\dfrac{3}{p}=\dfrac{1}{2}, \quad -\dfrac{1}{2}\leq\alpha\leq\dfrac{1}{p}
\end{equation*}
is verified. Continuing their study on the well-posedness of the gKdV equation \eqref{gkdv}, in the seminal work \cite{KPV6} they obtained  another space-time mixed estimate interchanging the position of the variables $x$, $t$ (see also \cite[Theorem 2.1]{KPV2000}). Indeed, assuming that
\begin{equation*}
\frac{1}{q}+\frac{1}{2p}=\frac{1}{4}, \quad \alpha=\frac{2}{p}-\frac{1}{q},
\quad 1\leq p,q\leq\infty, \quad -\frac{1}{4}\leq\alpha\leq1
\end{equation*}
for any $u_0 \in L^2(\R)$ there exists a constant $C_{p,q,\alpha}>0$ such that
\begin{equation*}
\|D^{\alpha}_x U(t)u_0\|_{L^{q}_xL^{p}_t}\leq C_{p,q,\alpha}
\|u_0\|_{L^2_x}.
\end{equation*}
In the particular case where $p=\infty$, it is also proved (see \cite[Lemma 3.29]{KPV6})
\begin{equation}\label{eq41}
\|D^{-\alpha}_x U(t)u_0\|_{L^{q}_xL^{\infty}_t}\leq C_{q,\alpha}
\|u_0\|_{L^2_x}
\end{equation}
for 
\begin{equation*}
\frac{1}{4}\leq \alpha <\frac{1}{2} \quad \textrm{ and } \quad \frac{1}{q}=\frac{1}{2}-\alpha.
\end{equation*}

All the above estimates play a fundamental role in the development of the corresponding local and global theory for the gKdV equation \eqref{gkdv}. Indeed, small data global well-posedness in the critical homogeneous Sobolev space $s_k=(k-4)/2k$, with $k\geq 4$ was established in \cite{KPV6}.

Notice that the Strichartz estimates \eqref{reg1} can also be rewritten in  terms of the critical homogeneous Sobolev space $\dot{H}_x^{s_k}(\R)$. Indeed, let $k\geq 4$ and assume $u_0 \in \dot{H}_x^{s_k}(\R)$ then there exists a constant $C_{p,q,k}>0$ such that
\begin{equation}\label{STR0}
 \|D^{1/p}_xU(t)u_0\|_{L_t^pL_x^q}\leq C_{p,q,k}\|u_0\|_{\dot{H}_x^{s_k}},
\end{equation}
where
\begin{equation}\label{Adm}
\dfrac{2}{p}+\dfrac{1}{q}=\dfrac{2}{k}.
\end{equation}
We say that a pair $(p,q)$ is $\dot{H}_x^{s_k}$-admissible if the last condition is satisfied. Typically, the particular case where $p=q$ is very useful in the  study of nonlinear  dispersive equations. For future reference we state this case below
\begin{equation}\label{STR}
 \|D^{2/3k}_xU(t)u_0\|_{L_{x,t}^{3k/2}}\leq C_{k} \|u_0\|_{\dot{H}_x^{s_k}}.
\end{equation}
Throughout the paper, other two particular cases of inequality \eqref{STR0} are of special interest
\begin{equation}\label{STR1}
p=\infty \,\,\textrm{ and }\,\, q=k/2
\end{equation}
and
\begin{equation}\label{STR2}
p=k+2 \,\,\textrm{ and }\,\, q=k(k+2)/4.
\end{equation}
It should be pointed out that the case \eqref{STR1} is, in fact, a consequence of the $\dot{H}_x^s(\R)$-norm preservation \eqref{HSP} and Sobolev embedding. On the other hand, for $k\geq 4$, we have $p<q$ in \eqref{STR2} and since $3k/2>k+2$ the inequality \eqref{STR} can also be obtained by complex interpolation between \eqref{STR1} and \eqref{STR2}. Also note that for $k=4$ estimates \eqref{STR} and \eqref{STR2} are the same.

Recently, the first author in a joint work with Ademir Pastor \cite{FP13} presented a simpler proof of the classical well-posedness result for the generalized KdV equation \cite{KPV6}. The key ingredient in the proof is the following Airy-Strichartz estimate

\begin{lemma}\label{lemma12} Let $k>4$, $s_k=(k-4)/2k$.
Then
\begin{equation}\label{STR3}
 \|U(t)u_0\|_{L^{5k/4}_xL^{5k/2}_t}\leq C_k\|u_0\|_{\dot{H}_x^{s_k}}
\end{equation}
\end{lemma}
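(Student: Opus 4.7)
The plan is to obtain \eqref{STR3} via complex interpolation between the Kato-type smoothing estimate \eqref{eq41} and the diagonal Strichartz estimate \eqref{STR}. First I will specialize \eqref{eq41} by taking $\alpha_0=(k-2)/(2k)$, which lies in $[1/4,1/2)$ precisely when $k\geq 4$ and forces $q_0=k$. This produces the first endpoint
\begin{equation*}
\|D_x^{-(k-2)/(2k)}U(t)u_0\|_{L^k_xL^\infty_t}\leq C\|u_0\|_{L^2_x}.
\end{equation*}
Next I will apply \eqref{STR} with $u_0$ replaced by $D_x^{-s_k}u_0$ to rewrite the diagonal Strichartz as an $L^2_x$-based endpoint
\begin{equation*}
\|D_x^{2/(3k)-s_k}U(t)u_0\|_{L^{3k/2}_{x,t}}\leq C\|u_0\|_{L^2_x}.
\end{equation*}

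With both endpoints having $L^2_x$ on the right, I will apply Stein's analytic interpolation to the family
\begin{equation*}
T_z u_0=e^{z^2-1}D_x^{a(z)}U(t)u_0,\qquad a(z)=(1-z)\bigl(-\tfrac{k-2}{2k}\bigr)+z\bigl(\tfrac{2}{3k}-s_k\bigr),
\end{equation*}
defined on the closed strip $0\leq\Re z\leq 1$. Because the imaginary powers $D_x^{iy}$ act as unit-norm Fourier multipliers on $L^2_x$, on the boundary line $\Re z=0$ the family inherits the first endpoint as a uniform $L^2_x\to L^k_xL^\infty_t$ bound, and on $\Re z=1$ it inherits the second endpoint as a uniform $L^2_x\to L^{3k/2}_{x,t}$ bound; the Gaussian factor $e^{z^2-1}$ guarantees the growth condition required by Stein's theorem, whose vector-valued version accommodates mixed Lebesgue targets.

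It remains to identify the interpolated parameters at $\theta=3/5$. Direct computation gives $1/q(\theta)=(2/5)(1/k)+(3/5)(2/(3k))=4/(5k)$ and $1/p(\theta)=(3/5)(2/(3k))=2/(5k)$, i.e.\ the target space $L^{5k/4}_xL^{5k/2}_t$, together with
\begin{equation*}
a(3/5)=\tfrac{2}{5}\bigl(-\tfrac{k-2}{2k}\bigr)+\tfrac{3}{5}\bigl(\tfrac{2}{3k}-s_k\bigr)=-\tfrac{k-4}{2k}=-s_k.
\end{equation*}
Consequently Stein interpolation yields $\|D_x^{-s_k}U(t)u_0\|_{L^{5k/4}_xL^{5k/2}_t}\leq C\|u_0\|_{L^2_x}$, which is equivalent to \eqref{STR3} after substituting $u_0\mapsto D_x^{s_k}u_0$. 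I expect the main obstacle to be the rigorous verification that the family $\{T_z\}$ is admissible for complex interpolation with a mixed-norm target; this reduces to recording the $L^2_x$-boundedness of $D_x^{iy}$ uniformly in $y$ and the standard identification of the complex interpolation space $[L^{p_0}_xL^{q_0}_t,L^{p_1}_xL^{q_1}_t]_\theta=L^p_xL^q_t$, but one should state these prerequisites carefully.
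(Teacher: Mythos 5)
Your proposal is correct and is essentially the argument the paper itself relies on: Lemma \ref{lemma12} is quoted from \cite{FP13}, and Remark \ref{R42} records precisely your interpolation, namely the analytic (Stein) interpolation with weights $2/5$ and $3/5$ between the Kenig--Ponce--Vega maximal-type estimate \eqref{eq41} (taken with $\alpha=(k-2)/2k$, $q=k$, applied to $D_x^{s_k}u_0$) and the diagonal Strichartz estimate \eqref{STR}. Your exponent bookkeeping ($1/q=4/(5k)$, $1/p=2/(5k)$, $a(3/5)=-s_k$) checks out, so nothing further is needed.
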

\begin{proof}
See \cite[Lemma 2.5]{FP13}.
\end{proof}

The crucial point here, due to the invariance of the Airy equation \eqref{Lkdv} under some transformations, is that the Strichartz inequalities \eqref{STR0} and \eqref{STR3} cannot guarantee the solution map from $\dot{H}_x^s(\R)$ to the Strichartz space to be compact. Indeed, it is easy to see that the norm in $\dot{H}_x^{s_k}(\R)$ is invariant under translations $\tau_y$, dilatations $\delta_h$ and frequency modulations $R_{t_0}$ defined respectively by
\begin{equation}\label{INV}
\tau_y\varphi(x)=\varphi(x-y), \quad \delta_h\varphi(x)=\frac{1}{h^{2/k}}\varphi\left(\frac{x}{h}\right) \quad \textrm{and}\quad R_{t_0}\varphi(x)=U(t_0)\varphi(x).
\end{equation}
Moreover, let $(x_n)_{n\in \mathbb{N}}$ and $(t_n)_{n\in \mathbb{N}}$ be sequences in $\R$ both going to infinity and $(h_n)_{n\in \mathbb{N}}$ be a sequence in $\R_+^{\ast}$ going to zero. If $\varphi$ is a non zero fixed element of $\dot{H}_x^{s_k}(\R)$ then it is clear that $(\tau_{x_n}\varphi)_{n\in \mathbb{N}}$, $(\delta_{h_n}\varphi)_{n\in \mathbb{N}}$ and $(R_{t_n}\varphi)_{n\in \mathbb{N}}$ converge weakly to $0$. However, for every $\dot{H}_x^{s_k}$-admissible pair $(p,q)$ (see \eqref{Adm}) the Strichartz norms of these sequences are all equal to $\|D^{1/p}_xU(t)\varphi\|_{L_t^pL_x^q}$. The same happens with the Strichartz norm $L_x^{5k/4}L^{5k/2}_t$. Therefore, we cannot expect the solution map of equation \eqref{Lkdv} from the space $\dot{H}_x^{s_k}(\R)$ to the Strichartz spaces to be compact and indeed the above sequences are not relatively compact in the Strichartz spaces.

The failure of compactness also appears in the Sobolev embedding $\dot{H}^s(\R^d)\subset L^{\frac{2d}{d-2s}}$ and was clarified by G\'erard \cite{Ge98}. The same problem in the context of dispersive equations have received a great deal of attention since the last decade. Inspired by \cite{Ge98}, Bahouri and G\'erard \cite{BaGe99} and Keraani \cite{Ke01} obtained analogous results related to the Sobolev-Strichartz estimates, respectively for the wave and Schr\"odinger equations. Roughly speaking, they proved that a sequence of solutions to the linear equation with bounded data in a certain homogeneous Sobolev space can be decomposed, up to a subsequence, into a sum of \emph{almost orthogonal} concentrating waves and a rest which is small in the associated Strichartz norms. In the literature, this is referred as \emph{linear profile decomposition}. We should also mention the works of Merle and Vega \cite{MeVe98} and B\'egout and Vargas \cite{BeVa07} for the Schr\"odinger equation, Ramos \cite{Ra12} for the wave equation and Shao \cite{Sh092} for the critical KdV equation (\eqref{gkdv} with $k=4$), where Strichartz estimates at the lowest scales are considered and some refined Strichartz inequalities are needed. 

In the spirit of G\'erard \cite{Ge98} (see also \cite{Ke01, BaGe99, Sh092}) we prove that the transformations $\tau_y$, $\delta_h$ and $R_{t_0}$ defined in \eqref{INV} are the only responsible for the lack of compactness in the Airy-Strichartz estimates \eqref{STR3} and \eqref{STR0}, with $(p,q)$ satisfying \eqref{Adm} and $k+2<p<\infty$. More precisely, we prove the following linear profile decomposition result for the Airy equation in the critical homogeneous Sobolev space $\dot{H}_x^{s_k}(\R)$.

\begin{theorem}\label{profdec}
 Let $k>4$ and ${\bf u}=(u_n)_{n\in \mathbb{N}}$ be a bounded sequence in $\dot{H}_x^{s_k}(\R)$, with $s_k=(k-4)/2k$. Set $v_n=U(t)u_n$. Then there exists a subsequence, which we still denote by ${\bf v}=(v_n)_{n\in \mathbb{N}}$, a sequence of functions $(\psi^j)_{j\in \mathbb{N}}\subset \dot{H}_x^{s_k}(\R)$ and a family of parameters $(h_n^j, x_n^j, t_n^j)_{n\in \mathbb{N}, j\in \mathbb{N}}$  such that for every $J\geq1$ there exists $R^J_n\subset \dot{H}_x^{s_k}(\R)$ satisfying
\begin{equation*}
v_n(x,t)=\sum_{j=1}^{J}\dfrac{1}{(h_n^j)^{2/k}}U\left(\dfrac{t-t_n^j}{(h_n^j)^{3}}\right)\psi^j\left(\dfrac{x-x_n^j}{h_n^j}\right) + U(t)R_n^J(x),
\end{equation*}
where the reminder sequences  $(R_n^J)_{n\in\N}$ satisfies  the following asymptotic smallness property
\begin{equation}\label{SPWNL}
\limsup_{n\rightarrow \infty} \|D_x^{1/p}U(t)R_n^J\|_{L^p_tL^{q}_x} \rightarrow 0,  \peqq \textrm{as} \peqq J\rightarrow \infty,
\end{equation}
for all $\dot{H}_x^{s_k}$-admissible pair $(p,q)$, with $k+2<p<\infty$, satisfying \eqref{Adm}.

Furthermore, the sequences of parameters have a pairwise divergence property: For all $1\leq i \neq j\leq J$,
\begin{equation}\label{XT}
\lim_{n\rightarrow \infty} \left|\dfrac{h_n^i}{h_n^j}\right|+\left|\dfrac{h_n^j}{h_n^i}\right|+\left|\dfrac{t_n^i-t_n^j}{(h_n^i)^{3}}\right|+\left|\dfrac{x_n^i-x_n^j}{h_n^i}\right|=\infty.
\end{equation}

Finally, for fixed $J\geq 1$, we have the asymptotic Pythagorean expansion
\begin{equation}\label{PYTHA}
\|v_n(\cdot,0)\|^2_{\dot{H}_x^{s_k}}-\sum_{j=1}^{J}\|\psi^j\|^2_{\dot{H}_x^{s_k}} - \|R_n^J\|^2_{\dot{H}_x^{s_k}} =o(1),  \peqq \textrm{as} \peqq n\rightarrow \infty.
\end{equation}
\end{theorem}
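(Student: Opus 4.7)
The plan is to follow the Bahouri--G\'erard/Keraani/Shao scheme, adapted to the Airy setting with mixed space-time norms in the order $L^p_t L^q_x$. The main work goes into a refined Strichartz inequality; the rest is an iterative extraction.

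\textbf{Step 1: Refined Strichartz.} First I would establish a refinement of \eqref{STR3} (and of \eqref{STR0} at the endpoint \eqref{STR2}, since by interpolation with \eqref{STR1} these control all admissible pairs with $k+2<p<\infty$). Concretely, using a Whitney-type decomposition of $\R\times\R$ into pairs of dyadic intervals $(Q,Q')$ of comparable size and well-separated, together with a bilinear Airy estimate of Kenig--Ponce--Vega type, I would prove an inequality of the form
\begin{equation*}
\|U(t)u_0\|_{L^{5k/4}_xL^{5k/2}_t} \;\lesssim\; \|u_0\|_{\dot H^{s_k}}^{1-\theta}\, \Big(\sup_{Q\text{ dyadic}} |Q|^{-s_k}\,\|P_Q u_0\|_{L^2_x}\Big)^{\theta},
\end{equation*}
for some $\theta\in(0,1)$, where $P_Q$ denotes a Littlewood--Paley-type projection onto the frequency interval $Q$. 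A parallel refinement would be proved for the Strichartz pair \eqref{STR2} (it then transfers to the range $k+2<p<\infty$ by complex interpolation with \eqref{STR1}). This is the hardest part of the proof, since the mixed norm $L^{5k/4}_xL^{5k/2}_t$ is in the opposite order to the Schr\"odinger case studied by Keraani, so bilinear restriction needs to be argued directly for the Airy curve.

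\textbf{Step 2: Extraction lemma.} From Step~1, if $\mathbf u=(u_n)$ is bounded in $\dot H^{s_k}$ and
\[
\limsup_{n\to\infty}\|U(t)u_n\|_{L^{5k/4}_xL^{5k/2}_t}\;\geq\;\eta>0,
\]
then there exist dyadic intervals $Q_n$ with scale $h_n:=|Q_n|^{-1}$ such that $|Q_n|^{-s_k}\|P_{Q_n}u_n\|_{L^2_x}\gtrsim\eta^{1/\theta}$. By translating in frequency, rescaling via $\delta_{h_n}$, and picking $(x_n,t_n)$ so that a dual-space test element realizes a positive mass in the limit, I would extract a nontrivial weak limit $\psi$ in $\dot H^{s_k}$ such that
\[
(h_n)^{2/k}\,\bigl[U(-t_n/h_n^3)u_n\bigr](h_n\cdot+x_n)\;\rightharpoonup\;\psi \quad\text{in }\dot H^{s_k}.
\]
Moreover, $\|\psi\|_{\dot H^{s_k}}\gtrsim\eta^{1/\theta}$, giving a quantitative lower bound on each extracted profile.

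\textbf{Step 3: Iteration and Pythagorean expansion.} Set $R_n^0=u_n$ and, having extracted $\psi^1,\dots,\psi^J$ with parameters $(h_n^j,x_n^j,t_n^j)$, define
\[
R_n^J(x)=u_n(x)-\sum_{j=1}^{J}\frac{1}{(h_n^j)^{2/k}}\,U\!\Big(\frac{-t_n^j}{(h_n^j)^{3}}\Big)\psi^j\!\Big(\frac{x-x_n^j}{h_n^j}\Big).
\]
By the weak convergence in Step~2 and the unitarity \eqref{HSP}, the cross term $\langle \psi^j, \text{first-profile residue}\rangle_{\dot H^{s_k}}$ vanishes in the limit, giving
\[
\|u_n\|_{\dot H^{s_k}}^2=\sum_{j=1}^J\|\psi^j\|_{\dot H^{s_k}}^2+\|R_n^J\|_{\dot H^{s_k}}^2+o(1),
\]
which is \eqref{PYTHA}. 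This in turn forces $\sum_j\|\psi^j\|_{\dot H^{s_k}}^2<\infty$, hence $\|\psi^j\|_{\dot H^{s_k}}\to 0$; combined with the quantitative extraction in Step~2, this yields $\limsup_n\|U(t)R_n^J\|_{L^{5k/4}_xL^{5k/2}_t}\to 0$ as $J\to\infty$. The bound \eqref{SPWNL} for every admissible $(p,q)$ with $k+2<p<\infty$ then follows by interpolating against the uniform bound of $U(t)R_n^J$ in $L^\infty_tL^{k/2}_x$ from \eqref{STR1}.

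\textbf{Step 4: Pairwise orthogonality \eqref{XT}.} I would argue by contradiction: suppose for some $i\neq j$ the ratios $h_n^i/h_n^j$, $(t_n^i-t_n^j)/(h_n^i)^3$ and $(x_n^i-x_n^j)/h_n^i$ all stayed bounded along a subsequence. After passing to a further subsequence, the rescaled/translated operators used to extract $\psi^i$ and $\psi^j$ would converge to the same symmetry in \eqref{INV}, and the weak-convergence definition of $\psi^j$ as a limit of $R_n^{j-1}$ (which has $\psi^i$ subtracted out) would force $\psi^j=0$, contradicting its non-triviality. Repeating inductively in $J$ and using a diagonal extraction of subsequences completes the proof. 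The only genuine obstacle is Step~1; the remaining steps are the standard Hilbert-space abstract nonsense once the correct notion of profile (encoding dilation, translation, and time-shift under $U(t)$) is in place.
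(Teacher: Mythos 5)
Your plan is the Shao/B\'egout--Vargas route: prove a refined (bilinear-restriction type) Strichartz inequality and run an inverse-Strichartz extraction. This is genuinely different from what the paper does, and the difference matters here: the paper deliberately avoids any Strichartz refinement. It applies G\'erard's $L^2$ decomposition (Proposition \ref{GeR}) to $D^{s_k}_x u_n$ to isolate the scales $h^j_n$, controls the remainder in $L^\infty_t L^{k/2}_x$ via the Besov-refined Sobolev inequality \eqref{RefSob} (Lemma \ref{Shao09}), and then, for each fixed scale, extracts the translation/time parameters by a Fanelli--Visciglia-type weak-limit iteration (Proposition \ref{1osc}) whose quantitative engine is G\'erard's estimate \eqref{Gereq}. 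All of this uses only $s_k>0$, which is exactly why the argument works for $k>4$ but not $k=4$. Against this background, your Step 1 is a genuine gap, not a routine lemma: the refinement of \eqref{STR3} in the reversed mixed norm $L^{5k/4}_x L^{5k/2}_t$ (and of \eqref{STR2}) is not available off the shelf, you do not prove it, and you yourself flag that the bilinear Airy estimate in this norm ordering would have to be argued from scratch. A proof that rests on an unproved hard estimate is incomplete. Moreover, the refinement you write down only involves $\sup_Q$ over dyadic frequency intervals of $\|P_Q u_0\|_{L^2}$, i.e.\ a Besov-type quantity; such a refinement detects the concentration scale $h_n$ but does not by itself produce the space--time parameters $(x_n,t_n)$ or the lower bound $\|\psi\|_{\dot H^{s_k}}\gtrsim \eta^{1/\theta}$ claimed in Step 2. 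Some additional mechanism is needed there (in the paper this is precisely the role of \eqref{Gereq}, which bounds the $L^{k/2}_x$ norm of a ${\bf 1}$-oscillatory sequence by the largest $\dot H^{s_k}_x$ norm among weak limits of its translates); as written, Step 2 is a gesture, not an argument.

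A second concrete problem is the last line of your Step 3. You propose to deduce \eqref{SPWNL}, i.e.\ smallness of $\|D^{1/p}_x U(t)R^J_n\|_{L^p_tL^q_x}$ for all admissible pairs with $k+2<p<\infty$, by interpolating smallness in $L^{5k/4}_x L^{5k/2}_t$ against a uniform bound in $L^\infty_t L^{k/2}_x$. These two norms have the space and time variables in opposite orders, and complex interpolation of mixed norms does not pass between the $L^q_xL^p_t$ and $L^p_tL^q_x$ scales (Minkowski only gives a one-sided comparison, and only when the outer exponent dominates, which fails for the admissible pairs with $p$ close to $k+2$); there is also a derivative $D^{1/p}_x$ to account for that neither of your two endpoint norms carries. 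The paper's order of logic is the reverse of yours: it first makes the remainder \emph{small} in $L^\infty_t L^{k/2}_x$ (via \eqref{RefSob} and the Besov smallness from Proposition \ref{GeR}), interpolates within the $L^p_tL^q_x$ family between \eqref{STR1} and \eqref{STR2} to get \eqref{SPWNL}, and only afterwards deduces decay in the reversed norm $L^{5k/4}_xL^{5k/2}_t$ as a corollary (Remark \ref{R42}). Your Steps 3--4 (Pythagorean expansion and pairwise divergence by contradiction) are the standard machinery and are fine in outline, but the proof as proposed stands or falls with the unproved refined estimate and with the unjustified interpolation step, so it does not yet establish Theorem \ref{profdec}.
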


\begin{remark}\label{R42}
By analytic interpolation and inequalities \eqref{STR}, \eqref{eq41} we have
\begin{equation*}
\begin{split}
\|U(t)u_0\|_{L^{5k/4}_xL^{5k/2}_t} &\leq c \|D_x^{-1/k}U(t)u_0\|_{L^{k}_xL^{\infty}_t}^{2/5} \|D_x^{2/3k}U(t)u_0\|_{L^{3k/2}_{x,t}}^{3/5}\\
&\leq c \|D_x^{s_k}u_0\|_{L^2_x}^{2/5} \|D_x^{2/3k}U(t)u_0\|_{L^{3k/2}_{x,t}}^{3/5} .
\end{split}
\end{equation*}
Since $3k/2>k+2$, the relations \eqref{SPWNL} and \eqref{PYTHA} imply that the remainder also asymptotically vanishes in the Strichartz space $L_x^{5k/4}L^{5k/2}_t$ (when $k=4$ this was already observed by Killip, Kwon, Shao and Visan \cite{KKSV12}).
\end{remark}

Our proof of Theorem \ref{profdec} is based on the classical linear profile decomposition for the energy critical Schr\"odinger equation proved by Keraani \cite{Ke01} and also on the the ideas recently developed by Fanelli and Visciglia \cite{FV13} in a general setting. We strongly used the hypothesis  $s_k>0$, which holds for $k>4$, and as consequence our arguments cannot be adapted for the $L^2$-critical case $k=4$ (note that $s_4=0$). As mention above, this case was already considered by Shao \cite{Sh092} and the main tool in his proof is a refined version of the Airy-Strichartz estimate \eqref{STR} with $k=4$. Here, we replace the Strichartz refinement by the inequality \eqref{Gereq} below proved by G\'erard \cite{Ge98}.

The profile decompositions has proven to be a powerful tool to study several different problems related to nonlinear dispersive equations. For instance, they can be useful in establishing the global well-posedness and scattering results for the Schr\"odinger and wave equations at critical regularity, see \cite{KeMe06, KeMe08, TaViZh07, KTV09, KV10}. Another important applications was in the study of mass concentration phenomena near the blow up time for the mass critical Schr\"odinger equation, see \cite{MeVe98, BeVa07, Ke06}. It was also used to show the existence of maximizers for Sobolev-Strichartz inequalities related to the Schr\"odinger, wave and KdV equations, see for instance \cite{Ku03 , Fo07, Bu10, Sh091, Sh092} and references therein.

In the present work, as an application of the linear profile decomposition stated in Theorem \ref{profdec}, we prove the existence of maximizers for inequalities \eqref{STR0} and \eqref{STR3}. This problem has attracted a lot of attention in the last few years for other Strichartz  estimates, see for instance \cite{Ku03 , Fo07, Ca09, Bu10, Sh091, Sh092, FVV12, BeRo13}. Existence of maximizers for Strichartz inequalities associated with the Schr\"odinger equation in one dimension were first proved by Kunze \cite{Ku03}. Later, Foschi \cite{Fo07} showed the existence and uniqueness of maximizers for the Schr\"odinger propagator, and their exactly Gaussian format. Similar results were obtained through a different strategy by Hundertmark and Zharnitsky \cite{hum-zhar}; see also Carneiro \cite{Ca09} for a generalization of their results. In the nonlinear setting, Duyckaerts, Merle and Roudenko\cite{Du-Me-Ro} proved that the maximum of some $L^p$ norm in the time and space variables (related to the linear problem) is attained for a given small mass solution of the $L^2$-critical nonlinear Schr\"odinger equation in general dimension. Moreover, in dimension one and two, they showed that the maximizer is unique and also obtained a precise estimate of the maximum. This was the first result showing the existence of a maximizer considering a nonlinear dispersive equation. For the wave equation, Bez and Rogers \cite{BeRo13} succeed in characterizing the best constant of the  Strichartz inequality and also the shape of maximizers in dimension five. The shape of the maximizers remains an open problem for all Airy-Strichartz inequalities and we only consider the existence of maximizers in the present work. Our first result in this direction is related to the Airy-Strichartz inequality \eqref{STR0}.
\begin{theorem}\label{Max1}
Let $k>4$ and $(p,q)$ be an $\dot{H}_x^{s_k}$-admissible pair $(p,q)$ satisfying \eqref{Adm}, with $k+2<p<\infty$. Then there exists a maximizing function $\psi_k\in \dot{H}_x^{s_k}$ such that
$$ 
\|D^{1/p}_xU(t)\psi_k\|_{L_t^pL_x^q}=M_k\|D_x^{s_k}\psi_k\|_{L^2_x},
$$
with
$$
M_k=\sup \{ \|D^{1/p}_xU(t)\psi\|_{L_t^pL_x^q} : \psi\in \dot{H}_x^{s_k}, \|D_x^{s_k}\psi\|_{L^2_x}=1\}
$$
being the sharp constant.
\end{theorem}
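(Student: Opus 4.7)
The plan is a standard concentration--compactness argument built on Theorem \ref{profdec}. Take a maximizing sequence $\psi_n\in\dot{H}_x^{s_k}$ with $\|D_x^{s_k}\psi_n\|_{L^2_x}=1$ and $\|D_x^{1/p}U(t)\psi_n\|_{L^p_tL^q_x}\to M_k$, and extract the subsequence furnished by Theorem \ref{profdec}, so that $v_n=U(t)\psi_n$ decomposes into profiles $\psi^j$, parameters $(h_n^j,x_n^j,t_n^j)$ and a remainder $R_n^J$.

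The first observation is that each concentrating wave
\[
U(t)\phi_n^j(x)\ :=\ \dfrac{1}{(h_n^j)^{2/k}}U\!\left(\dfrac{t-t_n^j}{(h_n^j)^{3}}\right)\psi^j\!\left(\dfrac{x-x_n^j}{h_n^j}\right)
\]
is $U(t)\psi^j$ composed with the symmetries from \eqref{INV}. The admissibility relation \eqref{Adm} is exactly what makes these symmetries preserve simultaneously the $\dot{H}_x^{s_k}$ norm and the Airy--Strichartz norm $\|D_x^{1/p}U(t)\,\cdot\,\|_{L^p_tL^q_x}$. Applying the sharp inequality to the fixed profile $\psi^j$ therefore gives
\[
\|D_x^{1/p}U(t)\phi_n^j\|_{L^p_tL^q_x}=\|D_x^{1/p}U(t)\psi^j\|_{L^p_tL^q_x}\leq M_k\,\|\psi^j\|_{\dot{H}_x^{s_k}}.
\]

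The core analytic step is to upgrade the Pythagorean identity \eqref{PYTHA}, via the pairwise divergence \eqref{XT}, into asymptotic orthogonality in the mixed Strichartz norm: as $n\to\infty$,
\[
\|D_x^{1/p}v_n\|_{L^p_tL^q_x}^{p}=\sum_{j=1}^{J}\|D_x^{1/p}U(t)\phi_n^j\|_{L^p_tL^q_x}^{p}+\|D_x^{1/p}U(t)R_n^J\|_{L^p_tL^q_x}^{p}+o(1).
\]
Granting this, letting $n\to\infty$ and then $J\to\infty$ and invoking \eqref{SPWNL} yield
\[
M_k^{p}\leq\sum_{j=1}^{\infty}M_k^{p}\|\psi^j\|_{\dot{H}_x^{s_k}}^{p}\leq M_k^{p}\Big(\sup_{j}\|\psi^j\|_{\dot{H}_x^{s_k}}\Big)^{\!p-2}\sum_{j=1}^{\infty}\|\psi^j\|_{\dot{H}_x^{s_k}}^{2}\leq M_k^{p}\sup_{j}\|\psi^j\|_{\dot{H}_x^{s_k}}^{p-2},
\]
where the last step uses \eqref{PYTHA} in the form $\sum_j\|\psi^j\|_{\dot{H}_x^{s_k}}^{2}\leq 1$. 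Since $p>k+2>2$, this forces $\sup_j\|\psi^j\|_{\dot{H}_x^{s_k}}\geq 1$; combining this with the Pythagorean bound, exactly one profile (call it $\psi^1$) satisfies $\|\psi^1\|_{\dot{H}_x^{s_k}}=1$, all other $\psi^j$ vanish, and $\|R_n^1\|_{\dot{H}_x^{s_k}}\to 0$. Plugging back into the orthogonality identity collapses it to $\|D_x^{1/p}U(t)\psi^1\|_{L^p_tL^q_x}=M_k$, so $\psi_k:=\psi^1$ is the sought maximizer.

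The main obstacle I anticipate is the Strichartz-norm orthogonality just used, because $p\neq q$ in general and no direct Brezis--Lieb decomposition exists in $L^p_tL^q_x$. The standard workaround is to split \eqref{XT} into cases: if $h_n^i/h_n^j\to 0$ or $\infty$, rescale so that one profile stabilizes and show that the other escapes to incompatible frequencies or space-time scales; if the scales are comparable, exploit the divergence of $|x_n^i-x_n^j|/h_n^i$ or $|t_n^i-t_n^j|/(h_n^i)^{3}$. Approximating each $\psi^j$ by a smooth compactly supported function and using the dispersive localization of $U(t)$ makes the rescaled waves asymptotically disjointly supported in space-time, so that the cross terms in $\|D_x^{1/p}v_n\|_{L^p_tL^q_x}^{p}$ are $o(1)$ by H\"older. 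This is analogous to the strategy of Keraani \cite{Ke01} and its Airy adaptation in Shao \cite{Sh092}, and is where the bulk of the technical work will lie.
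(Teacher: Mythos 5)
Your overall strategy is the same as the paper's: run the profile decomposition of Theorem \ref{profdec} on a maximizing sequence, decouple the Strichartz norm of the sum of concentrating waves, and pigeonhole against the Pythagorean expansion \eqref{PYTHA} to force a single profile of full $\dot{H}^{s_k}_x$ norm, which is then the maximizer. However, the step you single out as the ``core analytic step'' is stated incorrectly, and this is a genuine gap. The asymptotic identity
\begin{equation*}
\|D_x^{1/p}v_n\|_{L^p_tL^q_x}^{p}=\sum_{j=1}^{J}\|D_x^{1/p}U(t)\phi_n^j\|_{L^p_tL^q_x}^{p}+\|D_x^{1/p}U(t)R_n^J\|_{L^p_tL^q_x}^{p}+o(1)
\end{equation*}
is false in general for mixed norms. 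Take the regime $p>3k/2$, where $q<p$, and two profiles with the same scales and times that separate only in space: then for each fixed $t$ the inner $L^q_x$ norms decouple at exponent $q$, and since $(a^q+b^q)^{p/q}>a^p+b^p$ for $p>q$, one gets $\|f+g\|_{L^p_tL^q_x}^p$ strictly larger than $\|f\|^p+\|g\|^p$ in the limit. So not only does the equality fail, but even the ``$\leq$'' direction at exponent $p$ — which is exactly what your chain $M_k^p\leq\sum_j M_k^p\|\psi^j\|_{\dot H^{s_k}_x}^p$ requires — fails there. Your proposed workaround (approximate by compactly supported functions and estimate ``the cross terms in $\|D_x^{1/p}v_n\|_{L^p_tL^q_x}^p$'') does not repair this, because the $p$-th power of a mixed norm has no clean diagonal-plus-cross-terms expansion when $p\neq q$; disjointness of supports in $x$ alone decouples at exponent $q$, not $p$.

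The fix, which is what the paper actually does (Lemma \ref{mainlemma}, built on Remark \ref{Vnj}), is to prove only the one-sided bound at the exponent $B=\min\{p,q\}$:
\begin{equation*}
\limsup_{n\rightarrow\infty}\Big\|D_x^{1/p}\sum_{j=1}^{J}V_n^j\Big\|_{L^p_tL^q_x}^{B}\ \leq\ \sum_{j=1}^{J}\big\|D_x^{1/p}U(t)\psi^j\big\|_{L^p_tL^q_x}^{B},
\end{equation*}
obtained by expanding the $q$-th power inside the $L^{p/q}_t$ norm, applying H\"older to isolate bilinear factors, and using the orthogonality of the parameters to show $\|D^{1/p}_xV^j_n\,D^{1/p}_xV^l_n\|_{L^{p/2}_tL^{q/2}_x}\to 0$ for $j\neq l$ (a change-of-variables argument, not disjointness of supports). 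Since admissibility with $k+2<p<\infty$ gives $B>2$, your pigeonhole argument then goes through verbatim with $p$ replaced by $B$: $(M_k-\eps)^B\lesssim M_k^{B}\sup_j\|\psi^j\|_{\dot H^{s_k}_x}^{B-2}\sum_j\|\psi^j\|_{\dot H^{s_k}_x}^2$ forces one profile of norm $1$ and all others zero, and the final identification of $\psi^{j^*}$ as a maximizer follows as you indicate (the last step needs only \eqref{PYTHA}, the Strichartz bound on the remainder, and the triangle inequality). So the architecture of your proof is right, but the decoupling lemma must be formulated and proved at exponent $\min\{p,q\}$, and proving that lemma is where the real work of the paper lies.
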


It's worth mentioning that the above theorem also guarantee the existence of maximizers for the Airy-Strichartz inequality \eqref{STR}. This inequality in the critical case $k=4$ was studied by Shao \cite{Sh092}. His work is based on the asymptotic embedding of a NLS solution to an approximate critical gKdV solution obtaining a dichotomy result. The main result in \cite{Sh092} states that  there exists either a maximizer for \eqref{STR} (with $k=4$) or a function  $\phi \in L_x^2$  with $\|\phi\|_{L^2}=1$ and a sequence $(a_n)_{n\in \N}$  with $|a_n| \rightarrow \infty$ such that the sequence of translations $e^{i (\cdot)a_n } \phi$ has Strichartz norm converging to the sharp constant. Our result in Theorem \ref{Max1} can be viewed as an extension of Shao's result since it removes the dichotomy, when $k>4$, and give a positive answer for the existence of maximizers also for a general class of Strichartz type inequalities associated to the Airy equation.

The next result is concerned with the Airy-Strichartz inequality \eqref{STR3}. To the best of our knowledge,  this  is the first result  in the literature about the existence of maximizers for Strichartz estimates interchanging the position of the variables $x$, $t$.
\begin{theorem}\label{Max2}
Let $k>4$, then there exists a maximizing function $\Psi_k\in \dot{H}_x^{s_k}$ such that
\begin{equation}\label{STRM2}
 \|U(t)\Psi_k\|_{L^{5k/4}_xL^{5k/2}_t}=L_k\|D_x^{s_k}\Psi_k\|_{L^2_x},
\end{equation}
with
$$
L_k=\sup \{ \|U(t)\Psi\|_{L^{5k/4}_xL^{5k/2}_t} : \Psi\in \dot{H}_x^{s_k}, \|D_x^{s_k}\Psi\|_{L^2_x}=1\}
$$
being the sharp constant.
\end{theorem}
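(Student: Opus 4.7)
The plan is to apply the linear profile decomposition of Theorem \ref{profdec} to a maximizing sequence and show that only one profile can survive, that profile being the sought maximizer. Take $(\Psi_n)_{n\in\N}\subset \dot H^{s_k}_x$ with $\|D_x^{s_k}\Psi_n\|_{L^2_x}=1$ and $\|U(t)\Psi_n\|_{L^{5k/4}_xL^{5k/2}_t}\to L_k$, and apply Theorem \ref{profdec} (together with Remark \ref{R42}) to write, along a subsequence, $U(t)\Psi_n=\sum_{j=1}^{J}T_n^j\psi^j+U(t)R_n^J$, with parameters $(h_n^j,x_n^j,t_n^j)$ pairwise divergent as in \eqref{XT}, the Pythagorean identity \eqref{PYTHA}, and remainder satisfying $\limsup_n\|U(t)R_n^J\|_{L^{5k/4}_xL^{5k/2}_t}\to 0$ as $J\to\infty$.

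The core step is an asymptotic Pythagorean-type inequality for the mixed Strichartz norm: one expects an exponent $r>2$ (the natural candidate being the outer index $r=5k/4$) such that, for every fixed $J$,
\[
\limsup_{n\to\infty}\|U(t)\Psi_n\|_{L^{5k/4}_xL^{5k/2}_t}^{r}\leq\sum_{j=1}^{J}\|U(t)\psi^j\|_{L^{5k/4}_xL^{5k/2}_t}^{r}+\limsup_n\|U(t)R_n^J\|_{L^{5k/4}_xL^{5k/2}_t}^{r}.
\]
This is an iterated Brezis--Lieb-type bound in the mixed-norm space: the pairwise divergence \eqref{XT}, combined with the dispersive propagation of the Airy group, forces the space-time footprints of any two modulated profiles $T_n^i\psi^i$ and $T_n^j\psi^j$ to become asymptotically disjoint, so that cross terms are negligible in the limit.

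Granted the decoupling, the Strichartz bound $\|U(t)\psi^j\|_{L^{5k/4}_xL^{5k/2}_t}\leq L_k\|\psi^j\|_{\dot H^{s_k}_x}$ and sending $J\to\infty$ yield $L_k^r\leq L_k^r\sum_{j\geq1}\|\psi^j\|_{\dot H^{s_k}_x}^r$, that is $\sum_{j\geq1}\|\psi^j\|_{\dot H^{s_k}_x}^r\geq 1$. Since $k>4$ forces $r>2$, one may use the Pythagorean bound $\sum_j\|\psi^j\|_{\dot H^{s_k}_x}^2\leq1$ to estimate
\[
1\leq\sum_j\|\psi^j\|_{\dot H^{s_k}_x}^r\leq\Bigl(\sup_j\|\psi^j\|_{\dot H^{s_k}_x}\Bigr)^{r-2}\sum_j\|\psi^j\|_{\dot H^{s_k}_x}^{2}\leq\Bigl(\sup_j\|\psi^j\|_{\dot H^{s_k}_x}\Bigr)^{r-2},
\]
which forces $\sup_j\|\psi^j\|_{\dot H^{s_k}_x}\geq 1$. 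Combined with Pythagorean, this isolates exactly one nontrivial profile $\psi^1$, with $\|\psi^1\|_{\dot H^{s_k}_x}=1$, all others zero, and $R_n^J\to 0$ strongly in $\dot H^{s_k}_x$. Retracing through the decoupling, $\|U(t)\psi^1\|_{L^{5k/4}_xL^{5k/2}_t}=L_k$, so $\Psi_k:=\psi^1$ is the desired maximizer. The hardest part will be the mixed-norm decoupling, since the outer and inner Lebesgue indices differ; the delicate case is that of pure time shifts (divergent $t_n^j$ with bounded $x_n^j/h_n^j$ and $h_n^j$ ratios), where the $L^{5k/4}_x$-disjointness argument breaks down and one must exploit the $L^{k}_xL^\infty_t$-type smoothing estimate \eqref{eq41} to show that distinct time-translates of the Airy orbit are asymptotically decorrelated.
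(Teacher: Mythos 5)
Your proposal follows essentially the same route as the paper: maximizing sequence, profile decomposition of Theorem \ref{profdec} with the remainder controlled in $L^{5k/4}_xL^{5k/2}_t$ via Remark \ref{R42}, a decoupled bound for the sum of profiles, the Strichartz bound \eqref{STR3} applied to each profile, and the Pythagorean expansion \eqref{PYTHA} together with an exponent strictly larger than $2$ to isolate a single profile of unit norm; this is exactly how the paper deduces Theorem \ref{Max2} from the argument of Theorem \ref{Max1}. The ``core step'' that you leave as an expectation is precisely inequality \eqref{D2} of Lemma \ref{mainlemma}, with exponent $5k/4=\min\{5k/4,\,5k/2\}$ as you guessed; the paper proves it by expanding the $5k/4$-th power via H\"older's inequality so that every cross term carries a bilinear factor of the form $\|V_n^jV_n^l\|$ in a suitable mixed norm, and such factors vanish as $n\to\infty$ by the change of variables of Remark \ref{Vnj} combined with approximation of $U(t)\psi^j$ by continuous, compactly supported functions of $(x,t)$. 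Note that this density argument already covers the case you single out as delicate: when $h_n^j\equiv h_n^l$ and $|t_n^j-t_n^l|/(h_n^j)^3\to\infty$, the time translation pushes the compactly supported approximants off each other's supports, so the cross terms vanish by dominated convergence and no recourse to the smoothing estimate \eqref{eq41} is needed. Apart from that, your pigeonholing via $\sup_j\|\psi^j\|_{\dot{H}_x^{s_k}}$ is an equivalent variant of the paper's selection of the index $j^*$ maximizing the profile Strichartz norm, and the final identification $\|U(t)\psi^1\|_{L^{5k/4}_xL^{5k/2}_t}=L_k$ goes through as you describe, using \eqref{STR3} to bound the remainder by its $\dot{H}_x^{s_k}$ norm once it has been shown to vanish.
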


It should be pointed out that existence of maximizers for Sobolev-Strichartz inequalities associated to a general class of propagators were studied by Fanelli, Vega and Visciglia \cite{FVV12}. In this paper, the authors give a unified proof which involves a large class of examples of propagators. Indeed, the authors proved the existence of maximizers for Sobolev-Strichartz estimates from homogeneous Sobolev space $\dot{H}^s(\R)$, with $s>0$, to spaces $L^r_{x,t}$. Our result in Theorem \ref{Max1} is more general since we obtain the existence of maximizers for anisotropic Strichartz estimates in spaces $L^p_tL^q_x$, for general couples $(p,q)$ satisfying \eqref{Adm} with $k+2<p<\infty$. Moreover, Theorem \ref{Max2} also deals with the anisotropic Strichartz space $L_x^{5k/4}L^{5k/2}_t$.

This paper is organized as follows. In the next section we introduce some notation and prove the linear profile decomposition result stated in Theorem \ref{profdec}. Next, in section \ref{mainresult}, we present the proof Theorems \ref{Max1} and \ref{Max2}.

\section{Profile Decomposition in $\dot{H}_x^{s_k}(\R)$}

We start this section by introducing the notation used throughout the paper. We use $c$ to denote various constants that may vary line by line. Given any positive numbers $a$ and $b$, the notation $a \lesssim b$ means that there exists a positive constant $c$ such that $a \leq cb$, with $c$ uniform with respect to the set where $a$ and $b$ vary. We also use the notation $a_n=o(1)$ as $n\rightarrow \infty$ to denote the limit $\lim_{n\rightarrow \infty} a_n=0$.

The $L^p(\R)$ norm is denoted by $\|\cdot\|_{L^p}$. If necessary, we use subscript to inform which variable we are concerned with. The mixed norms $L^p_tL^q_x$, $L^q_xL^p_t$ are defined, respectively, as
\begin{equation*}
\|f\|_{L^p_tL^q_x}= \left(\int_{\R}\|f(\cdot,t)\|_{L^q_x}^p dt \right)^{1/p} \textrm{ and } \quad \|f\|_{L^q_xL^p_t}= \left(\int_{\R}\|f(x,\cdot)\|_{L^p_t}^q dx \right)^{1/q}.
\end{equation*}
with the usual modifications when $p =\infty$ or $q=\infty$.

The spatial Fourier transform of $f(x)$ is given by
\begin{equation*}
\hat{f}(\xi)=\int_{\R}e^{-ix\xi}f(x)dx.
\end{equation*}
For a general function $\sigma$ in $\R$, we define the Fourier multiplier $\sigma(D)$ by
$$
\widehat{(\sigma(D)f)}{\xi}=\sigma(\xi)\widehat{f}(\xi).
$$

We shall also define $D_x^s$ to be the Fourier multiplier with symbol $|\xi|^s$. In this case, the norm in the homogeneous Sobolev space $\dot{H}^s(\R)$ is given by
\begin{equation*}
\|f\|_{\dot{H}_x^s}\equiv \|D_x^sf\|_{L^2_x}=\||\xi|^s\hat{f}\|_{L^2_{\xi}}.
\end{equation*}

Next, we recall some notations and results introduced by G\'erard \cite{Ge98}.

\begin{definition}
Let ${\bf f}=(f_n)_{n\in\N}$ be a bounded sequence in $L^2(\R)$ and ${\bf h}=(h_n)_{n\in\N}$, ${\bf \widetilde{h}}=(\widetilde{h}_n)_{n\in\N}$ two sequence of positive scales. We say that
\begin{itemize}
\item[(i)] ${\bf f}$ is ${\bf h}$-oscillatory if
\begin{equation}\label{ho1}
\limsup_{n\rightarrow \infty}
\left(
\int_{h_n|\xi|\leq \frac{1}{R}}|\widehat{f_n}(\xi)|^2d \xi +
\int_{h_n|\xi|\geq {R}}|\widehat{f_n}(\xi)|^2d \xi
\right) \rightarrow 0 \,\,\, \textrm{ as }\,\,\, R\rightarrow \infty.
\end{equation}
\item[(ii)] ${\bf f}$ is ${\bf h}$-singular if, for every $b>a>0$, we have
\begin{equation}\label{ho2}
\lim_{n\rightarrow \infty}
\int_{a\leq h_n|\xi|\leq b}|\widehat{f_n}(\xi)|^2d \xi =0.
\end{equation}
\item[(iii)] ${\bf h}$ and ${\bf \widetilde{h}}$ are orthogonal if
\begin{equation}\label{ho3}
\lim_{n\rightarrow \infty}
\left( \left| \frac{h_n}{\widetilde{h}_n} \right| + \left| \frac{\widetilde{h}_n}{h_n} \right|
\right) =+\infty.
\end{equation}
\end{itemize}
\end{definition}

\begin{remark}\label{Orth}
Note that if ${\bf f}$ is ${\bf h}$-oscillatory and ${\bf g}$ is ${\bf h}$-singular, then Plancherel's inversion formula and Cauchy-Schwartz inequality yield
$$
\lim_{n\rightarrow \infty}
\int_{\R}f_n(x)\overline{g_n(x)}dx =0,
$$
which implies that ${\bf f}$ and ${\bf g}$ are orthogonal in the sense that
$$
\|f_n+g_n\|^2_{L^2_x}=\|f_n\|^2_{L^2_x}+\|g_n\|^2_{L^2_x}+o(1) \,\,\, \textrm{ as }\,\,\, n\rightarrow \infty.
$$
Moreover, if ${\bf f}$ is a bounded sequence in $L^2(\R)$ and ${\bf h}$-oscillatory with respect to a scale ${\bf h}$, then ${\bf f}$ is ${\bf \widetilde{h}}$-singular for every scale ${\bf \widetilde{h}}$ orthogonal to ${\bf h}$.
\end{remark}

The following result, obtained by G\'erard \cite[Theorem 2.9]{Ge98}, is closely related to the above notations. It provides a decomposition of a bounded sequence in $L^2(\R)$ into a sum of oscillatory functions plus a singular error. This error goes to zero in an appropriate norm when the number of terms in the oscillatory sum goes to infinity.

\begin{proposition}\label{GeR}[G\'erard \cite[Theorem 2.9]{Ge98}]
 Let ${\bf f}=(f_n)_{n\in \mathbb{N}}$ be a bounded sequence in $L^2(\R)$. Then, there exists a subsequence, which we still denote by ${\bf f}=(f_n)_{n\in \mathbb{N}}$, a family $({\bf h}^j)_{j\in \mathbb{N}}$ of positive scales and a family $({\bf g}^j)_{j\in \mathbb{N}}$ of bounded functions in $L^2(\R)$ such that
 \begin{itemize}
 \item[(i)] for every $j \neq j'$, ${\bf h}^j$ and ${\bf h}^{j'}$ are orthogonal;
 \item[(ii)] for every $j\in \N$, ${\bf g}^j$ is ${\bf h}^j$-oscillatory;
 \item[(iii)] for every $J\geq1$ there exists a function $R^J_n \subset L^2(\R)$, which is ${\bf h}^j$-singular for $j=1,\dots,J$, satisfying
\begin{equation*}
f_n(x)=\sum_{j=1}^{J}g_n^j(x) + R_n^J(x), \,\, \textrm{ for all } \,\, x\in \R.
\end{equation*}
Moreover
\begin{equation}\label{SPWNL3}
\limsup_{n\rightarrow \infty} \|R_n^J\|_{\dot{B}^0_{2,\infty}} \rightarrow 0,  \peqq \textrm{as} \peqq J\rightarrow \infty,
\end{equation}
where $\|\cdot\|_{\dot{B}^0_{2,\infty}}$ is the Besov norm defined\footnote{For a rigorous definition of the Besov space $\dot{B}^0_{2,\infty}$ in terms of cut-off functions we refer the reader to the book Bergh and L{\"o}fstr{\"o}m \cite{BeLo}.} by
$$
\|f\|_{\dot{B}^0_{2,\infty}} = \sup_{k\in\Z}\left(\int_{2^k\leq |\xi|\leq 2^{k+1}}|\hat{f}(\xi)|^2 d\xi\right)^{1/2}.
$$
 \item[(iv)] for a fixed $J\geq 1$ we also have
\begin{equation}\label{PYTHA3}
\|f_n\|^2_{L^2_x}-\sum_{j=1}^{J}\|g_n^j\|^2_{L^2_x} - \|R_n^J\|^2_{L^2_x} =o(1),  \peqq \textrm{as} \peqq n\rightarrow \infty.
\end{equation}
 \end{itemize}
\end{proposition}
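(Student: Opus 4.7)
The plan is to prove Proposition \ref{GeR} by iteratively extracting $\mathbf{h}^j$-oscillatory profiles, with the Besov norm $\|\cdot\|_{\dot{B}^0_{2,\infty}}$ serving as an obstruction functional: its vanishing on a sequence is exactly the statement that no single dyadic frequency scale carries nontrivial $L^2$ mass.

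The core building block is a single-profile extraction lemma. Given $\mathbf{f}=(f_n)$ bounded in $L^2(\R)$ with $\eta:=\limsup_{n\to\infty}\|f_n\|_{\dot{B}^0_{2,\infty}}>0$, I would pass to a subsequence on which the Besov supremum is almost attained along a specific dyadic shell, producing integers $k_n\in\Z$ with
\begin{equation*}
\int_{2^{k_n}\leq |\xi|\leq 2^{k_n+1}}|\widehat{f_n}(\xi)|^2\,d\xi \;\geq\; \tfrac{1}{2}\eta^2.
\end{equation*}
Setting $h_n=2^{-k_n}$ and defining $g_n$ to be the Fourier projection of $f_n$ onto this shell (possibly enlarged by $R$ neighboring shells, with $R$ a parameter to be sent to infinity later via a diagonal argument), $\mathbf{g}=(g_n)$ is $\mathbf{h}$-oscillatory by construction, satisfies $\liminf_n\|g_n\|_{L^2}\geq \eta/\sqrt{2}$, and Plancherel delivers the exact orthogonal decomposition $\|f_n\|_{L^2}^2=\|g_n\|_{L^2}^2+\|f_n-g_n\|_{L^2}^2$.

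Iterating this, set $R_n^0:=f_n$ and $R_n^j:=R_n^{j-1}-g_n^j$, where $g_n^j$ is extracted by applying the lemma to $(R_n^{j-1})_{n\in\N}$. Summing the exact Pythagorean identities yields \eqref{PYTHA3}, and since $\sup_n\|f_n\|_{L^2}^2<\infty$ forces $\sum_j (\liminf_n\|g_n^j\|_{L^2})^2$ to be finite, the obstruction levels $\eta_j\lesssim \liminf_n\|g_n^j\|_{L^2}$ tend to zero, which is exactly \eqref{SPWNL3}. The $\mathbf{h}^j$-singularity of $R_n^J$ for each $j\leq J$ is then enforced by letting the annulus widths $R$ in the construction go to infinity along a diagonal, so that $g_n^j$ absorbs essentially all Fourier mass at scale $h^j_n$.

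The main obstacle is the asymptotic orthogonality of scales \eqref{ho3}: disjointness of the extracted dyadic shells only gives $k_n^i\neq k_n^j$, not the required divergence $|k_n^i-k_n^j|\to\infty$. I would address this with two combined devices: (a) at each step, extract a thickened annulus of width $R_j\to\infty$ at scale $h^j_n$, so that afterwards the remainder has vanishing mass on any fixed annulus at scale comparable to $h^j_n$, forcing the next extracted scale to be truly orthogonal; and (b) along any subsequence where two scales remain comparable, merge the associated profiles into a single $\mathbf{h}$-oscillatory profile, using that a finite sum of $\mathbf{h}$-oscillatory sequences is again $\mathbf{h}$-oscillatory. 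A standard diagonal subsequence then finalizes the decomposition, and Remark \ref{Orth} recovers the full Pythagorean expansion.
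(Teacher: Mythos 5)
You should first note that the paper does not prove Proposition \ref{GeR} at all: it is imported verbatim from G\'erard \cite[Theorem 2.9]{Ge98}, so there is no internal proof to compare against. Judged on its own, your sketch is essentially a reconstruction of G\'erard's iterative extraction: the $\dot{B}^0_{2,\infty}$ norm as the obstruction functional, selection of a near-maximizing dyadic shell giving the scale $h^j_n$, sharp Fourier truncation onto annuli around that shell so that Plancherel gives exact Pythagorean identities at each step (hence \eqref{PYTHA3}), and summability of the extracted $L^2$ masses forcing the obstruction levels $\eta_j$ to zero (hence \eqref{SPWNL3}). That skeleton is correct, modulo the usual diagonal subsequence bookkeeping across the infinitely many extractions.

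The soft spot is the interplay between oscillation, singularity and orthogonality, and as written it contains a genuine gap. Device (a), with annulus widths $R_j$ depending only on $j$ (fixed in $n$), does \emph{not} make $R^j_n$ $\mathbf{h}^j$-singular as required in item (iii): the remainder can retain mass at a bounded but large frequency ratio from $h^j_n$, and then your claim that the next scale is ``forced to be truly orthogonal'' fails, since non-orthogonality only means a bounded ratio, not an equal shell. To get singularity the truncation width must grow with $n$; and it cannot grow arbitrarily fast, because the projection of $R^{j-1}_n$ onto $\{1/\rho_n\le h^j_n|\xi|\le\rho_n\}$ is $\mathbf{h}^j$-oscillatory only if $\rho_n\to\infty$ slowly. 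The standard way is to pass first to a subsequence along which the annulus masses $m_n(a,b)=\int_{a\le h^j_n|\xi|\le b}|\widehat{R^{j-1}_n}(\xi)|^2d\xi$ converge for all dyadic $a,b$, and then choose $\rho_n$ so that $m_n(1/\rho_n,\rho_n)$ converges to the monotone limit $\lim_{R\to\infty}\lim_{n\to\infty}m_n(1/R,R)$; this is the real content of the ``diagonal argument'' you invoke and it should be carried out explicitly. Once this is done, your worry about orthogonality \eqref{ho3} disappears and device (b) is unnecessary: the cutoffs are sharp, so $|\widehat{R^{j}_n}|\le|\widehat{R^{j-1}_n}|$ pointwise and $R^j_n$ is $\mathbf{h}^i$-singular for every $i\le j$; if the shell selected at step $j+1$ had a scale not orthogonal to some $h^i_n$, then along a subsequence it would sit inside a fixed annulus $\{a\le h^i_n|\xi|\le b\}$, where the singular remainder carries $o(1)$ mass, contradicting the lower bound of order $\eta_{j+1}^2>0$. (If some $\eta_{j_0}=0$ you simply stop, pad with zero profiles and arbitrary pairwise orthogonal scales; singularity is then automatic because a vanishing Besov norm gives vanishing mass on every fixed annulus at any scale.) The merging device, besides being unneeded, is underspecified — re-indexing after a merge restarts the whole bookkeeping — so I would delete it and replace it by the explicit slow-growth, $n$-dependent truncation just described.
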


Next, we prove a profile decomposition result for ${\bf 1}$-oscillatory bounded sequence in $\dot{H}_x^{s_k}(\R)$. Later, the general case stated in Theorem \ref{profdec} will be reduced to this one using G\'erard's result in Proposition \ref{GeR}.

\begin{proposition}\label{1osc}
 Let $k>4$ and ${\bf u}=(u_n)_{n\in \mathbb{N}}$ be a bounded sequence in $\dot{H}_x^{s_k}(\R)$ and assume $D_x^{s_k}{\bf u}=(D_x^{s_k}u_n)_{n\in \mathbb{N}}$ is ${\bf 1}$-oscillatory. Then there exists a subsequence, which we still denote by ${\bf u}=(u_n)_{n\in \mathbb{N}}$, a family of parameters $(x_n^j, t_n^j)_{n\in \mathbb{N}, j\in \mathbb{N}}$ and sequence of functions $(\psi^j)_{j\in \mathbb{N}}\subset \dot{H}_x^{s_k}(\R)$ such that for every $J\geq1$ there exists $W^J_n\subset \dot{H}_x^{s_k}(\R)$ satisfying
\begin{equation*}
u_n(x)=\sum_{j=1}^{J}U\left(-t_n^j\right)\psi^j\left(x-x_n^j\right) + W_n^J(x).
\end{equation*}
Moreover, the sequence $D_x^{s_k}{{\bf {W}}^J}=(D_x^{s_k}W_n^J)_{n\in \mathbb{N}}$ is ${\bf 1}$-oscillatory for all $J \in \mathbb{N}$ and
\begin{equation}\label{SPWNL2}
\limsup_{n\rightarrow \infty} \|D_x^{1/p}U(t)W_n^J\|_{L^p_tL^{q}_x} \rightarrow 0,  \peqq \textrm{as} \peqq J\rightarrow \infty,
\end{equation}
for all $\dot{H}_x^{s_k}$-admissible pair $(p,q)$, with $k+2<p<\infty$, satisfying \eqref{Adm}.

Furthermore, for all $1\leq i \neq j\leq J$, we have
\begin{equation}\label{XT2}
\lim_{n\rightarrow \infty} \left|x_n^i-x_n^j\right|+\left|t_n^i-t_n^j\right|=\infty,
\end{equation}
and, for fixed $J\geq 1$
\begin{equation}\label{PYTHA2}
\|u_n\|^2_{\dot{H}_x^{s_k}}-\sum_{j=1}^{J}\|\psi^j\|^2_{\dot{H}_x^{s_k}} - \|W_n^J\|^2_{\dot{H}_x^{s_k}} =o(1),  \peqq \textrm{as} \peqq n\rightarrow \infty.
\end{equation}
\end{proposition}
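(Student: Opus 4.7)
The plan is to adapt the Bahouri--G\'erard/Keraani extraction scheme to the Airy setting, exploiting the fact that the ${\bf 1}$-oscillation of $D_x^{s_k}u_n$ freezes the dilation scale, so that only the translations $x_n^j$ and time shifts $t_n^j$ need to be produced. The key analytic ingredient I will use is a refined Strichartz inequality of G\'erard type adapted to ${\bf 1}$-oscillatory sequences: for such $f$ and any $\dot{H}_x^{s_k}$-admissible pair $(p,q)$ with $k+2<p<\infty$, one should have
\begin{equation*}
\|D_x^{1/p}U(t)f\|_{L^p_tL^q_x}\lesssim \|D_x^{s_k}f\|_{L^2_x}^{1-\theta}\cdot\|U(t)f\|_{L^{\infty}_{t,x}}^{\theta}
\end{equation*}
for some $\theta=\theta(k,p,q)\in(0,1)$. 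This should follow from a Littlewood--Paley decomposition (nearly trivial under ${\bf 1}$-oscillation, since only finitely many dyadic pieces carry most of the $L^2$ mass up to a small tail) combined with interpolation between the endpoint Strichartz estimates \eqref{STR1}--\eqref{STR2} and a Bernstein-type $L^{\infty}_{t,x}$ control.

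Next I would define $\nu({\bf u})=\limsup_{n\to\infty}\sup_{t,x\in\R}|U(t)u_n(x)|$. If $\nu({\bf u})=0$, the refined inequality immediately gives \eqref{SPWNL2} with the trivial decomposition. Otherwise, after passing to a subsequence, one selects $(t_n^1,x_n^1)$ with $|U(t_n^1)u_n(x_n^1)|\gtrsim\nu({\bf u})$. Testing $U(t_n^1)u_n(\cdot+x_n^1)$ against a smooth bump concentrated near the origin, and using that $U(t)$ and spatial translations are isometries of $\dot{H}_x^{s_k}(\R)$, one obtains along a further subsequence a weak limit
\begin{equation*}
U(t_n^1)u_n(\cdot+x_n^1)\rightharpoonup\psi^1\quad\text{in}\quad\dot{H}_x^{s_k}(\R),\qquad\|\psi^1\|_{\dot{H}_x^{s_k}}\gtrsim\nu({\bf u})>0.
\end{equation*}
Set $W_n^1=u_n-U(-t_n^1)\psi^1(\cdot-x_n^1)$; weak convergence together with the above isometry properties yields the $j=1$ Pythagorean identity, and since frequency localization is preserved by $U$ and by spatial translations, $D_x^{s_k}W_n^1$ is again ${\bf 1}$-oscillatory. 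This is exactly the property that allows the refined inequality to be reapplied to the remainder.

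Iterating the extraction produces $\psi^j$, $(t_n^j,x_n^j)$ and $W_n^J$ satisfying \eqref{PYTHA2} and preserving ${\bf 1}$-oscillation at every step. The parameter divergence \eqref{XT2} follows by contradiction: were $(x_n^i-x_n^j,\,t_n^i-t_n^j)$ bounded along a subsequence for some $i<j$, the weak limit defining $\psi^j$ in the $j$-th frame would detect the profile $\psi^i$ already subtracted at step $i$, forcing $\psi^j\equiv 0$ and contradicting $\|\psi^j\|_{\dot{H}_x^{s_k}}\gtrsim\nu({\bf W}^{j-1})>0$. From the telescoped Pythagoras bound $\sum_j\|\psi^j\|_{\dot{H}_x^{s_k}}^2\leq\limsup_n\|u_n\|_{\dot{H}_x^{s_k}}^2<\infty$ one deduces $\|\psi^j\|_{\dot{H}_x^{s_k}}\to 0$. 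Since by construction $\nu({\bf W}^J)\lesssim\|\psi^{J+1}\|_{\dot{H}_x^{s_k}}$, the refined inequality applied to $W_n^J$ (uniformly bounded in $\dot{H}_x^{s_k}$ and ${\bf 1}$-oscillatory) then yields \eqref{SPWNL2}.

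The step I expect to be most delicate is the precise form of the refined Strichartz inequality in the anisotropic setting $L^p_tL^q_x$: G\'erard's original argument is written for isotropic Sobolev--Strichartz inequalities, and extending it to the mixed space-time norm while keeping $\theta<1$ uniformly across the whole range $k+2<p<\infty$ requires care in the interpolation scheme, in particular choosing endpoints compatible with the anisotropic exponents of \eqref{STR0}. A secondary technical point is to check rigorously that ${\bf 1}$-oscillation is preserved at each step of the iteration with uniform-in-$J$ control of the frequency tails, since a loss here would obstruct both the reapplication of the refined inequality to $W_n^J$ and the Pythagorean bookkeeping.
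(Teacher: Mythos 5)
There is a genuine gap, and it sits at the heart of your scheme: the choice of $\nu(\mathbf u)=\limsup_n\sup_{t,x}|U(t)u_n(x)|$ as the extraction functional, together with the claim that $|U(t_n^1)u_n(x_n^1)|\gtrsim\nu$ forces a weak limit $\psi^1$ with $\|\psi^1\|_{\dot{H}_x^{s_k}}\gtrsim\nu$ (and, later, that $\nu(\mathbf W^J)\lesssim\|\psi^{J+1}\|_{\dot{H}_x^{s_k}}$, which is what you need to conclude \eqref{SPWNL2}). Since $s_k<1/2$, point evaluation is not controlled by the $\dot{H}^{s_k}$ norm, and ${\bf 1}$-oscillation only says that the frequency tails of $D_x^{s_k}u_n$ are small in $L^2$ — it says nothing about their $L^\infty$ size: a modulated bump at frequency $2^n$ of width $2^{-n}$ with $\dot{H}^{s_k}$-norm $\delta_n$ has $L^\infty$-norm $\sim\delta_n 2^{n(1/2-s_k)}$. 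So take $u_n=\psi+\delta_n g_n$ with $\psi$ fixed, $g_n$ such a packet centered at $x=n$, $\|g_n\|_{\dot H^{s_k}}=1$ and $\delta_n\to0$ slowly enough that $\delta_n2^{n(1/2-s_k)}\to\infty$. This sequence is bounded in $\dot H^{s_k}$ and $D_x^{s_k}u_n$ is ${\bf 1}$-oscillatory, yet $\nu(\mathbf u)=\infty$; your selection rule picks $(t_n^1,x_n^1)$ on the packets, and $U(t_n^1)u_n(\cdot+x_n^1)\rightharpoonup 0$ because the packet's $\dot H^{s_k}$-norm vanishes and $\psi(\cdot+n)\rightharpoonup0$. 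Thus $\psi^1=0$ while $\nu>0$: the lower bound $\|\psi^1\|\gtrsim\nu$ is false, the iteration cannot terminate, and \eqref{SPWNL2} does not follow. (Testing against a fixed bump does not rescue this: the pairing only sees the frequency-localized part, while the pointwise largeness may live entirely in the tails.) A related symptom: your refined inequality, as written for an individual $f$, is incompatible with the scaling $\delta_h$ in \eqref{INV} — the left side is invariant while the right side is not — so it can at best hold as an asymptotic statement for ${\bf 1}$-oscillatory sequences, which again shows that $L^\infty_{t,x}$ is not the natural endpoint here.

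The paper's proof avoids exactly this trap by running your same Bahouri--G\'erard/Keraani iteration with the functional $\liminf_n\|U(t)W_n^{J}\|_{L^\infty_tL^{k/2}_x}$ instead of $L^\infty_{t,x}$: all admissible $L^p_tL^q_x$ norms with $k+2<p<\infty$ are first reduced to $L^\infty_tL^{k/2}_x$ by interpolating the endpoint estimates \eqref{STR01} and \eqref{STR02}, and the crucial ``inverse'' step is supplied by G\'erard's inequality \eqref{Gereq}, valid precisely for ${\bf 1}$-oscillatory sequences, which bounds $\limsup_n\|\phi_n\|_{L^{k/2}_x}$ by $\limsup_n\|\phi_n\|_{\dot H^{s_k}}^{4/k}\,\gamma(\boldsymbol\phi)^{1-4/k}$ with $\gamma$ the supremum of norms of weak limits under translations. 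Unlike $L^\infty_x$, the $L^{k/2}_x$ norm is controlled by $\dot H^{s_k}$ through Sobolev embedding, so high-frequency junk with vanishing $\dot H^{s_k}$-mass cannot inflate the functional, and \eqref{Gereq} is exactly the missing implication ``functional not small $\Rightarrow$ nontrivial profile with norm bounded below''. If you replace $L^\infty_{t,x}$ by $L^\infty_tL^{k/2}_x$ and invoke \eqref{Gereq}, the remainder of your outline — iteration, Pythagorean expansion via weak convergence, orthogonality \eqref{XT2} by contradiction, preservation of ${\bf 1}$-oscillation, and the final smallness from $\sum_j\|\psi^j\|_{\dot H^{s_k}}^2<\infty$ — coincides with the paper's argument.
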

\begin{proof}
First of all, by inequality \eqref{STR0}, we have
\begin{equation}\label{STR01}
 \|D^{1/(k+2)}_xU(t)u_0\|_{L_t^{k+2}L_x^{k(k+2)/4}}\leq c\|u_0\|_{\dot{H}_x^{s_k}},
\end{equation}
and
\begin{equation}\label{STR02}
 \|U(t)u_0\|_{L_t^{\infty}L_x^{k/2}}\leq c\|u_0\|_{\dot{H}_x^{s_k}},
\end{equation}
Therefore, if $\{W^J_n\}_{n, J \in \mathbb{N}}$ is a bounded sequence in $\dot{H}_x^{s_k}(\R)$, by complex interpolation between \eqref{STR01} and \eqref{STR02}, we have
\begin{eqnarray*}
 \|D_x^{1/p}U(t)W^J_n\|_{L^{p}_tL^{q}_x}&\leq&  \|D^{1/(k+2)}_xU(t)W^J_n\|_{L_t^{k+2}L_x^{k(k+2)/4}}^{\theta}  \|U(t)W^J_n\|_{L_t^{\infty}L_x^{k/2}}^{1-\theta}\\
 &\leq& \|W^J_n\|_{\dot{H}^{s_k}_x}^{\theta} \|U(t)W^J_n\|_{L^{\infty}_t L^{k/2}_x}^{1-\theta}\\
 &\leq& c \|U(t)W^J_n\|_{L^{\infty}_t L^{k/2}_x}^{1-\theta},
 \end{eqnarray*}
where $(p,q)$ is an $\dot{H}_x^{s_k}(\R)$-admissible pair satisfying \eqref{Adm} and $\theta=(k+2)/p \in (0,1)$, since $p>k+2$.
Hence, to obtain the desired result   it is sufficient  to show that
\begin{equation}\label{UWNL}
\limsup_{n\rightarrow \infty} \|U(t)W^J_n\|_{{L^{\infty}_t L^{k/2}_x}} \rightarrow 0,  \peqq \textrm{as} \peqq J\rightarrow \infty.
\end{equation}

To simplify the notation let us denote $S \equiv L^{\infty}_tL^{k/2}_x$. In the sequel, we follow the idea introduced by Fanelli and Visciglia \cite{FV13} (see also Bahouri and G{\'e}rard \cite{BaGe99} and Keraani \cite{Ke01}). We construct the desired sequences iteratively as follows. Let
$$
4A_1=\liminf_{n\rightarrow \infty}\|U(t)u_n\|_S.
$$
If $A_1=0$ we take, for all $j,J\in \N$, $\psi^j=0$, $t_n^j=x_n^j=0$ and $W^J_n=u_n'$, where $(u_n')_{n\in \N}$ is a subsequence of $(u_n)_{n\in \N}$ such that $\lim_{n\rightarrow \infty}\|U(t)u'_n\|_S=0$ and the proof is completed.
Suppose now $A_1>0$, without loss of generality we can assume $\|U(t)u_n\|_S\geq 2A_1$ for all $n\in \N$. By definition of $S$, there exists a sequence of times $(t_n^1)_{n\in \N}$ such that
\begin{equation}\label{beta1}
\|U(t^1_n)u_n\|_{L^{k/2}_x}\geq \frac{1}{2} \|U(t)u_n\|_S\geq A_1.
\end{equation}

Define the sequences of functions ${\boldsymbol{\phi}^1}=(\phi_n^1(x))_{n\in \N}$, where
\begin{equation}\label{beta2}
\phi_n^1(x)=U(t_n^1)u_n(x).
\end{equation}
Note that $(D_x^{s_k}{\phi}_n^1)_{n\in \N}$ is ${\bf 1}$-oscillatory since $(D_x^{s_k}u_n)_{n\in \N}$ is ${\bf 1}$-oscillatory. Let $\mathbb{P}(\boldsymbol{\phi}^1)$ denote the set of all  possible weak limits in $\dot{H}_x^{s_k}(\R)$ of all possible subsequences of $\boldsymbol{\phi}^1$ with all their possible translations. We define
$$
\gamma(\boldsymbol{\phi}^1)=\sup\{\|\psi\|_{\dot{H}_x^{s_k}}: \psi \in \mathbb{P}(\boldsymbol{\phi}^1)\}.
$$
Clearly $\gamma(\boldsymbol{\phi}^1) \leq \limsup_{n\rightarrow \infty}\|u_n\|_{\dot{H}_x^{s_k}}$. We also have $\gamma(\boldsymbol{\phi}^1)>0$. Indeed, since  $(D_x^{s_k}{\phi}_n^1)_{n\in \N}$ is ${\bf 1}$-oscillatory, this is a consequence of the following inequality proved by G\'erard \cite[estimate (4.19)]{Ge98}
\begin{equation}\label{Gereq}
\limsup_{n\rightarrow \infty}\|\phi_n^1\|_{L^{k/2}_x}\leq c \limsup_{n\rightarrow \infty}\|\phi_n^1\|_{\dot{H}_x^{s_k}}^{4/k}\gamma(\boldsymbol{\phi}^1)^{1-4/k}.
\end{equation}
Therefore if $\gamma(\boldsymbol{\phi}^1)=0$, then $\limsup_{n\rightarrow \infty}\|\phi_n^1\|_{L^{k/2}_x}=0$, which, in view of \eqref{beta1} and \eqref{beta2}, is a contradiction with $A_1>0$.
Moreover, by definition, there exists a sequence of translations $(x_n^1)_{n\in \N}$, such that up to a subsequence
\begin{equation}\label{weak1}
U(t_n^1)u_n(x+x_n^1)=\phi_n^1(x+x_n^1)\rightharpoonup \psi^1 \textrm{ weakly in } \dot{H}_x^{s_k}(\R),
\end{equation}
as $n\rightarrow \infty$, where
$$
0<\gamma(\boldsymbol{\phi}^1)\leq 2 \|\psi^1\|_{\dot{H}_x^{s_k}}.
$$
In particular $\psi^1 \neq 0$.

Now, let
\begin{equation}\label{Rn1}
W_n^1(x)=u_n(x)-U(-t_n^1)\psi^1(x-x^1_n)
\end{equation}
by \eqref{weak1}, it is clear that
$$
U(t_n^1)W_n^1(x+x_n^1)\rightharpoonup 0 \textrm{ weakly in } \dot{H}_x^{s_k}(\R),
$$
as $n\rightarrow \infty$.
Moreover, by the $\dot{H}^{s_k}_x(\R)$-preservation, it follows that
\begin{equation}\label{Rn1H}
\begin{split}
 \|W_n^1\|^2_{\dot{H}_x^{s_k}}&= \|u_n\|^2_{\dot{H}_x^{s_k}}+ \|U(-t_n^1)\psi^1(\cdot -x_n^1)\|^2_{\dot{H}_x^{s_k}}-2\Re(u_n,U(-t_n^1)\psi^1(\cdot -x_n^1))_{\dot{H}_x^{s_k}}\\
 &= \|u_n\|^2_{\dot{H}_x^{s_k}}+ \|\psi^1\|^2_{\dot{H}_x^{s_k}}-2\Re(\phi_n^1(\cdot +x_n^1),\psi^1)_{\dot{H}_x^{s_k}}\\
 &= \|u_n\|^2_{\dot{H}_x^{s_k}}- \|\psi^1\|^2_{\dot{H}_x^{s_k}}+o(1),
\end{split}
\end{equation}
as $n\rightarrow \infty$.

We also have that the sequence $(D_x^{s_k}W^1_n)_{n\in \N}$ is ${\bf 1}$-oscillatory. Indeed, let $\sigma_R(D)$ denote the multiplier operator with symbol $\sigma_R=1_{\{|\xi|\leq 1/R\}\cup\{|\xi|\geq R\}}$. Applying this operator to the both sides of \eqref{Rn1} and arguing as in \eqref{Rn1H} we obtain
$$
\|\sigma_R(D)W_n^1\|^2_{\dot{H}_x^{s_k}}= \|\sigma_R(D)u_n\|^2_{\dot{H}_x^{s_k}}- \|\sigma_R(D)\psi^1\|^2_{\dot{H}_x^{s_k}}+o(1), \textrm{ as } n\rightarrow \infty.
$$
Therefore
$$
\limsup_{n\rightarrow \infty}
\int\sigma_R(\xi)|\xi|^{2s_k}|\widehat{W^1_n}(\xi)|^2d \xi \leq \limsup_{n\rightarrow \infty} \int\sigma_R(\xi)|\xi|^{2s_k}|\widehat{u_n}(\xi)|^2d \xi,
$$
for every $R>0$.

Next, we define
$$
A_2= 4\liminf_{n\rightarrow \infty} \|U(t)W^1_n\|_{S}
$$
Here again, the only case we need to consider is $A_2>0$, otherwise we are done. Repeating the above procedure, with $u_n$ replaced by $W^1_n$ we can find a sequence of times $(t^2_n)_{n \in \mathbb{N}}$, a sequence of centers $(x^2_n)_{n \in \mathbb{N}}$ and a function $\psi^2\in \dot{H}_x^{s_k}(\R)$, such that the sequence ${\boldsymbol{\phi}^2}=(\phi_n^2)_{n\in N}$ given by $\phi_n^2(x)=U(t_n^2)W_n^1(x)$ satisfies
\begin{equation}\label{Rphi02}
\|\phi_n^2\|_{L^{k/2}_x}\geq \frac{1}{2} \|U(t)W^1_n\|_S\geq A_2.
\end{equation}
\begin{equation}\label{Rphi12}
U(t_n^2)W^1_n(x+ x_n^2) = \phi_n^2(x+x_n^2)\rightharpoonup \psi^2(x)\neq 0  \textrm{ weakly in } \dot{H}_x^{s_k}(\R),
\end{equation}
as $n\rightarrow \infty$,\\
and
\begin{equation}\label{Rphi22}
2\|\psi^2\|_{\dot{H}^{s_k}_x}\geq \gamma({\boldsymbol{\phi}^2})>0.
\end{equation}
Moreover, in view of the relation \eqref{Rn1H}, the sequence ${\boldsymbol{\phi}^2}$ is bounded in $\dot{H}^{s_k}_x(\R)$ and  $(D_x^{s_k}{\phi}_n^2)_{n\in \N}$ is ${\bf 1}$-oscillatory.

Let
\begin{equation}\label{Rn2}
W_n^2(x)=W_n^1(x)-U(-t_n^2)\psi^2(x- x_n^2)
\end{equation}
and note that \eqref{Rphi12}  yields
$$
U(t_n^2)W^2_n(x+ x_n^2) \rightharpoonup 0  \textrm{ weakly in } \dot{H}_x^{s_k}(\R),
$$
as $n\rightarrow \infty$. Similarly  to    \eqref{Rn1H} we obtain
\begin{equation}\label{Rn2H}
\|W_n^2\|^2_{\dot{H}_x^{s_k}} = \|W_n^1\|^2_{\dot{H}_x^{s_k}}- \|\psi^2\|^2_{\dot{H}_x^{s_k}} + o(1)
\end{equation}
as $n\rightarrow \infty$. Therefore,  combining \eqref{Rn1H} and \eqref{Rn2H}, we have
\begin{equation*}
\|W_n^2\|^2_{\dot{H}_x^{s_k}} = \|u_n\|^2_{\dot{H}_x^{s_k}}- \|\psi^1\|^2_{\dot{H}_x^{s_k}} -\|\psi^2\|^2_{\dot{H}_x^{s_k}} + o(1)
\end{equation*}
Moreover, relations \eqref{Rn1} and \eqref{Rn2} yield the decomposition
$$
u_n(x)=U(-t_n^1)\psi^1(x-x^1_n)+U(-t_n^2)\psi^2(x-x^2_n)+W_n^2(x),
$$
where the sequence $(D_x^{s_k}W_n^2)_{n\in \N}$ is ${\bf 1}$-oscillatory.

We claim that
\begin{equation*}
\lim_{n\rightarrow \infty} |x_n^1-x_n^2|+|t_n^1-t_n^2|=\infty.
\end{equation*}
If not, by compactness we can assume $x_n^1-x_n^2 \rightarrow \bar{x}\in \R$ and $t_n^1-t_n^2 \rightarrow \bar{t}\in \R$, as $n\rightarrow \infty$. By definition of $W_n^1$ and $W_n^2$, we have
\begin{equation*}
\begin{split}
& U(t_n^2-t_n^1)\left(U(t_n^1)u_n(x+x_n^1-(x_n^1-x_n^2))-\psi^1(x-(x_n^1-x_n^2))\right)\\
=& U(t_n^2)\left(u_n(x+x_n^2))-U(t_n^1)\psi^1(x-(x_n^1-x_n^2))\right)\\
=& U(t_n^2)W_n^1(x+x_n^2).
\end{split}
\end{equation*}
Using the relation \eqref{weak1} and the assumption that $\bar{x}$ and $\bar{t}$ are finite, the first line of the above expression converges weakly to $0$, as $n\rightarrow \infty$. On the other hand, by \eqref{Rphi12}, the last line converges weakly to $\psi^2(x)\neq 0$, as $n\rightarrow \infty$, which is a contradiction.

Next, we construct the functions $\psi^j\in \dot{H}_x^{s_k}(\R)$, $j>2$ by induction. Indeed, let $J>2$  and  assume there exists sequences of times $(t^j_n)_{n \in \mathbb{N}}$, sequences of centers $(x^j_n)_{n \in \mathbb{N}}$ satisfying \eqref{XT2} and functions $\psi^j, W_n^j\in \dot{H}_x^{s_k}(\R)$, such that for every $j\in \{1,\dots, J-1\}$ we have
\begin{equation}\label{Rnj}
W_n^{j}(x)=W_n^{j-1}(x)-U(-t_n^j)\psi^j(x- x_n^j),
\end{equation}
where $W_n^0=u_n$ and the sequence $(D_x^{s_k} W_n^j)_{n\in \N}$ is ${\bf 1}$-oscillatory.

For $\phi_n^j(x)=U(t_n^j)W^{j-1}_n(x)$
\begin{equation}\label{wnj}
\phi_n^j(x+x_n^j)\rightharpoonup \psi^j \neq 0 \textrm{ weakly in } \dot{H}_x^{s_k}(\R),
\end{equation}
as $n\rightarrow \infty$. Moreover, the sequences $(\phi_n^j)_{n\in \N}$ are bounded in $\dot{H}^{s_k}_x(\R)$, $(D_x^{s_k} \phi_n^j)_{n\in \N}$ are ${\bf 1}$-oscillatory and satisfy
\begin{equation}\label{wnj2}
\|\phi_n^j\|_{L^{k/2}_x}\geq \frac{1}{2} \|U(t)W^{j-1}_n\|_S\geq A_j,
\end{equation}
where
$$
A_j= 4\liminf_{n\rightarrow \infty} \|U(t)W^{j-1}_n\|_{S}
$$
In addition, the functions $u_n$ can be written as
$$
u_n(x)=U(-t_n^1)\psi^1(x-x_n^1)+\cdots+U(-t_n^{J-1})\psi^{J-1}(x-x_n^{J-1})+W_n^{J-1}(x),
$$
satisfying
\begin{equation}\label{Rnjsk}
\|W_n^{J-1}\|^2_{\dot{H}_x^{s_k}} = \|u_n\|^2_{\dot{H}_x^{s_k}}- \|\psi^1\|^2_{\dot{H}_x^{s_k}} -\cdots-\|\psi^{J-1}\|^2_{\dot{H}_x^{s_k}} + o(1)
\end{equation}
as $n\rightarrow \infty$.

Now, we obtain $(t^J_n)_{n \in \mathbb{N}}$, sequences of centers $(x^J_n)_{n \in \mathbb{N}}$ satisfying \eqref{XT2} and functions $\psi^J, W_n^J\in \dot{H}_x^{s_k}(\R)$, satisfying the desired properties.  Define
$$
A_J= \liminf_{n\rightarrow \infty} \|U(t)W^{J-1}_n\|_{S}.
$$
If $A_J=0$ the proof is completed by taking $\psi^j=0$, for all $j\geq J$. Assume $A_J>0$ and apply the previous  procedure to the sequence $(W_n^{J-1})_{n\in \mathbb{N}}$ to obtain, passing to a subsequence if necessary, sequences of real numbers $(t^J_n)_{n \in \mathbb{N}}$, $(x^J_n)_{n \in \mathbb{N}}$ and a function $\psi^J\in \dot{H}_x^{s_k}(\R)$ such that
\begin{equation}\label{Rphij}
U(t_n^J)W^{J-1}_n(x+ x_n^J) = \phi_n^J(x+x_n^J)\rightharpoonup \psi^J(x)\neq 0  \textrm{ weakly in } \dot{H}_x^{s_k}(\R),
\end{equation}
as $n\rightarrow \infty$.

Next, we prove conditions \eqref{XT2} and \eqref{PYTHA2} for $j, k \in \{1,\dots,J\}$. First, by induction hypothesis  \eqref{XT2} holds for $j, k \in \{1,\dots,J-1\}$ and hence by definition we have
\begin{eqnarray*}
W_n^{J-1}&=&W_n^{j-1} - U(-t_n^j)\psi^j(\cdot-x_n^j)-U(-t_n^{j+1})\psi^{j+1}(\cdot-x_n^{j+1})\\
&&-\cdots - U(-t_n^{J-1})\psi^{J-1}(\cdot-x_n^{J-1}).
\end{eqnarray*}

Therefore, applying the operator $U(t_n^j)$ and the shift $x_n^j$, we obtain
\begin{equation*}
\begin{split}
U(t_n^j)W_n^{J-1}(\cdot+x_n^j)=& U(t_n^j)W_n^{j-1}(\cdot+x_n^j) - \psi^j \\
&-\sum_{k=j+1}^{J-1} U(t_n^j-t_n^k)\psi^k(\cdot+x_n^j-x_n^k).
\end{split}
\end{equation*}
The difference of the first two  terms on the right-hand side of the last equation goes to zero weakly in $\dot{H}_x^{s_k}(\R)$ by \eqref{wnj}, and the same happens with the summation term by our induction hypotheses. Hence,
\begin{equation}\label{URj1}
U(t_n^j)W_n^{J-1}(x+x_n^j) \rightharpoonup 0\textrm{ weakly in } \dot{H}_x^{s_k}(\R),
\end{equation}
as $n\rightarrow \infty$.
On the other hand, we have the following identity
$$
U(t_n^J)W_n^{J-1}(x+x_n^J)=U(t_n^j+(t_n^J-t_n^j))W_n^{J-1}(x+x_n^j+(x_n^J-x_n^j))
$$
If $(x_n^J,t_n^J)$ and $(x_n^j,t_n^j)$ are not orthogonal, we can assume $x_n^J-x_n^j \rightarrow \bar{x}\in \R$ and $t_n^J-t_n^j \rightarrow \bar{t}\in \R$. By combining this fact with \eqref{URj1} we deduce that
$$
U(t_n^J)W_n^{J-1}(x+x_n^J) \rightharpoonup 0\textrm{ weakly in } \dot{H}_x^{s_k}(\R),
$$
as $n\rightarrow \infty$, which is a contradiction with \eqref{Rphij}.

Next, we turn to the proof of \eqref{PYTHA2}. Recall that
$$
W_n^J(x)=W^{J-1}_n(x)-U(-t_n^J)\psi^J(x -x_n^J).
$$
By \eqref{Rnjsk}, the asymptotic Pythagorean expansion \eqref{PYTHA2} holds at rank $J-1$. Using the weak convergence \eqref{Rphij} and expanding
$$
 \|W_n^J\|^2_{\dot{H}_x^{s_k}}= \|U(t_n^J)W^J_n(\cdot +x_n^J)\|^2_{\dot{H}_x^{s_k}}=\|U(t_n^J)W^{J-1}_n(\cdot +x_n^J)-\psi^J\|^2_{\dot{H}_x^{s_k}}
$$
as an inner product, it is easy to conclude \eqref{PYTHA2} at rank $J$ using \eqref{URj1}.

Finally, we consider the smallness property \eqref{SPWNL2}. Recall from  the beginning of the proof that it is sufficient to prove \eqref{UWNL}.  First, we observe that
$$
\sum_{j\geq 1}\|\psi^j\|^2_{\dot{H}_x^{s_k}}\leq \limsup_{n\rightarrow \infty}\|u_n\|^2_{\dot{H}_x^{s_k}}
$$
by \eqref{PYTHA2}. Now, fix $\varepsilon>0$, using the fact that $(u_n)_{n\in \N}$ is bounded in $\dot{H}_x^{s_k}(\R)$, there exists $J=J(\varepsilon)$ such that for any $j\geq J(\varepsilon)$
$$
\gamma(\boldsymbol{\phi}^j)\leq 2\|\psi^j\|_{\dot{H}_x^{s_k}}<2\varepsilon.
$$

Since $(D_x^{s_k}\phi_n^j)_{n\in \N}$ is ${\bf 1}$-oscillatory and uniformly bounded in ${L}_x^{2}(\R)$ for every $j\in \N$, we can apply inequality \eqref{Gereq} to deduce that for every $j\geq J(\varepsilon)$
$$
\limsup_{n\rightarrow \infty}\|\phi^j_n\|_{L_x^{k/2}}\leq c\varepsilon,
$$
for some constant $c>0$ independent of $\varepsilon$ and $j\geq J(\varepsilon)$. Thus, using inequality \eqref{wnj2} we conclude \eqref{SPWNL2}.
\end{proof}

\begin{remark}\label{Remvn}
Let $v_n(x,t)=U(t)u_n(x)$. The last  proposition gives us the following decomposition
\begin{equation*}
v_n(x,t)=\sum_{j=1}^{J}U\left(t-t_n^j\right)\psi^j\left(x-x_n^j\right) + U(t)W_n^J(x),
\end{equation*}
satisfying the relations \eqref{SPWNL2}, \eqref{XT2} and \eqref{PYTHA2}.
\end{remark}

Now, we are in position to prove the general profile decomposition in $\dot{H}_x^{s_k}(\R)$.

\begin{proof}[Proof of Theorem \ref{profdec}]
We follow closely the proof of Theorem 1.6 from Keraani's work \cite{Ke01}.  By assumption $(D^{s_k}_xu_n)_{n\in \mathbb{N}}$ is a bounded sequence in $L_x^2(\R)$. Applying Proposition \ref{GeR} to this sequence, we obtain a family of positive orthogonal scales $({\bf h}^j)_{j\in \mathbb{N}}$ satisfying \eqref{ho3} and a family $({\bf g}^j)_{j\in \mathbb{N}}$ of bounded functions in $\dot{H}_x^{s_k}(\R)$, such that
\begin{equation}\label{DecUn}
u_n(x)=\sum_{j=1}^{J}g_n^j(x) + R_n^J(x), \,\, \textrm{ for all } \,\, x\in \R,
\end{equation}
where $(D^{s_k}_xg^j_n)_{n\in \mathbb{N}}$ is ${{\bf h}^j}$-oscillatory for every $j\in \N$ and $(D^{s_k}_xR^j_n)_{n\in \mathbb{N}}$ is ${{\bf h}^j}$-singular for every $j\in \{1,\dots, J\}$ and
\begin{equation}\label{DskR1}
\limsup_{n\rightarrow \infty} \|D^{s_k}_xR_n^J\|_{\dot{B}^0_{2,\infty}} \rightarrow 0,  \peqq \textrm{as} \peqq J\rightarrow \infty.
\end{equation}
Furthermore, the following asymptotic Pythagorean expansion holds for every $J\geq 1$
\begin{equation}\label{PYTHA4}
\|u_n\|^2_{\dot{H}_x^{s_k}}-\sum_{j=1}^{J}\|g_n^j\|^2_{\dot{H}_x^{s_k}} - \|R_n^J\|^2_{\dot{H}_x^{s_k}} =o(1),  \peqq \textrm{as} \peqq n\rightarrow \infty.
\end{equation}
Applying the propagator $\{U(t)\}_{t\in \R}$ in both sides of equality \eqref{DecUn} we obtain a decomposition of $v_n(x,t)=U(t)u_n(x)$ satisfying
\begin{equation}\label{vn}
v_n(x,t)=\sum_{j=1}^{J}U(t)g_n^j(x) + U(t)R_n^J(x), \,\, \textrm{ for all } \,\, (x,t)\in \R\times \R,
\end{equation}
Moreover, since $\{U(t)\}_{t\in \R}$ is unitary in $\dot{H}_x^{s_k}(\R)$, in view of \eqref{PYTHA4}, we also have for every $J\geq 1$ and $t\in \R$
\begin{equation}\label{PYTHA5}
\|v_n(\cdot, t)\|^2_{\dot{H}_x^{s_k}}-\sum_{j=1}^{J}\|g_n^j\|^2_{\dot{H}_x^{s_k}} - \|R_n^J\|^2_{\dot{H}_x^{s_k}} =o(1),  \peqq \textrm{as} \peqq n\rightarrow \infty.
\end{equation}

Next, we prove the smallness of the remainder $U(t)R_n^J(x)$ in the Strichartz norm $L_t^{\infty}L_x^{k/2}$. This is based on the following refined Sobolev inequality proved by Shao \cite{Sh091} (see also G\'erard, Meyer and Oru \cite{GMO96} and Keraani \cite{Ke01}).
\begin{lemma}\label{Shao09}[Shao\cite[inequality (4.1)]{Sh091}]
For any $1<r<\infty$, and  $s\geq 0$ satisfying  $\dfrac{1}{r}+\dfrac{s}{d}=\dfrac{1}{2}$ we have
\begin{equation*}
\|f\|_{L^{r}_x}\leq c \|D^{s}_xf\|^{1-2s/d}_{L^{2}_x}\|D^{s}_xf\|^{2s/d}_{\dot{B}^0_{2,\infty}} .
\end{equation*}
\end{lemma}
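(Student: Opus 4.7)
The plan is to prove this refined Sobolev inequality via a Littlewood-Paley decomposition combined with the classical layer-cake (distribution function) representation of the $L^r$ norm. The exponents already signal this: the relation $1/r = 1/2 - s/d$ is exactly the Sobolev scaling, and the refinement consists in replacing part of the $L^2$-based $D^s f$ norm by the weaker Besov norm $\|D^s f\|_{\dot{B}^0_{2,\infty}}$, which controls each Littlewood-Paley piece uniformly. Write $B=\|D^s f\|_{L^2}$, $A=\|D^s f\|_{\dot{B}^0_{2,\infty}}$, and set $a_j=\|\Delta_j D^s f\|_{L^2}$, so that $\sup_j a_j\leq A$ and $\sum_j a_j^2\sim B^2$. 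The case $s=0$ is trivial (both sides are $\|f\|_{L^2}$), so assume $0<s<d/2$, equivalently $r>2$.

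First I would bound the low-frequency projection $P_{\leq N}f$ in $L^\infty$. By Bernstein's inequality and the definition of the Besov norm,
\begin{equation*}
\|P_{\leq N}f\|_{L^\infty}\leq \sum_{2^j\leq N}\|\Delta_j f\|_{L^\infty}\lesssim \sum_{2^j\leq N}2^{jd/2}2^{-js}\,a_j\lesssim A\,N^{d/2-s},
\end{equation*}
the last step using the positivity of $d/2-s$ and the uniform bound $a_j\leq A$. Given $\lambda>0$, choose $N=N(\lambda)$ with $A N^{d/2-s}=\lambda/2$, so $N\sim(\lambda/A)^{r/d}$ (using $d/2-s=d/r$). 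Then $\{|f|>\lambda\}\subset\{|P_{>N}f|>\lambda/2\}$, and Chebyshev gives
\begin{equation*}
|\{|f|>\lambda\}|\lesssim \lambda^{-2}\|P_{>N}f\|_{L^2}^2=\lambda^{-2}\sum_{2^j>N}2^{-2js}a_j^2.
\end{equation*}

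Next I would plug this into the layer-cake formula $\|f\|_{L^r}^r=r\int_0^\infty\lambda^{r-1}|\{|f|>\lambda\}|\,d\lambda$, change variables from $\lambda$ to $N$ via $\lambda\sim A N^{d/r}$ (so $d\lambda\sim A N^{d/r-1}dN$), and invoke Fubini to interchange the sum in $j$ with the integral in $N$:
\begin{equation*}
\|f\|_{L^r}^r\lesssim A^{r-2}\sum_j a_j^2\int_0^{2^j}N^{d-2d/r-1}\,2^{-2js}\,dN\sim A^{r-2}\sum_j a_j^2\,2^{j(d-2d/r-2s)}.
\end{equation*}
The exponent $d-2d/r-2s$ is identically zero by the Sobolev relation $s=d(1/2-1/r)$, so the dyadic weights collapse and the right-hand side becomes $A^{r-2}\sum_j a_j^2\sim A^{r-2}B^2$. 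Taking $r$-th roots yields exactly $\|f\|_{L^r}\lesssim A^{1-2/r}B^{2/r}=A^{2s/d}B^{1-2s/d}$, which is the claimed inequality.

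The main obstacle I anticipate is the careful bookkeeping in the change of variables and the exponent cancellation: one must verify that the Fubini step is justified (positive integrand, so no issue), that the choice of $N(\lambda)$ is monotone and invertible on $(0,\infty)$, and above all that the exponent of the dyadic weight vanishes exactly — any deviation would degrade the conclusion from strong $L^r$ to only weak $L^r$. A minor technical point is the rigorous interpretation of the Littlewood-Paley decomposition $f=\sum_j\Delta_j f$ in the homogeneous setting, which one handles modulo polynomials or by approximating $f$ with Schwartz functions whose Fourier transforms vanish near the origin.
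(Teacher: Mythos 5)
Your proof is correct. The paper does not actually prove this lemma --- it only cites Shao \cite{Sh091} (and the references \cite{GMO96}, \cite{Ke01}) --- and your argument, Littlewood--Paley decomposition plus the layer-cake formula, with the low frequencies $P_{\leq N}f$ controlled in $L^{\infty}$ via Bernstein and the Besov norm and the high frequencies $P_{>N}f$ handled by Chebyshev in $L^2$, is precisely the standard proof behind that citation; the exponent bookkeeping ($d/2-s=d/r$, and the dyadic weight $d-2d/r-2s=0$) all checks out, and the Tonelli interchange is justified by positivity as you note.
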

In particular, for space dimension one and $s_k=\dfrac{1}{2}-\dfrac{2}{k}$, we have
\begin{equation}\label{RefSob}
\|f\|_{L^{k/2}_x}\leq c \|D^{s_k}_xf\|^{4/k}_{L^{2}_x}\|D^{s_k}_xf\|^{1-4/k}_{\dot{B}^0_{2,\infty}} .
\end{equation}
Next, since $U(t)R_n^J(x)$ is a solution of the linear equation \eqref{Lkdv} it is clear that $\sigma_k(D)U(t)R_n^J(x)$ is also a solution of the same equation, where $\sigma_k=1_{\{2^k\leq |\xi|\leq 2^{k+1}\}}$. Using again that $\{U(t)\}_{t\in \R}$ is a unitary group in $\dot{H}_x^{s_k}(\R)$, we obtain
\begin{equation}\label{DskR2}
\|D^{s_k}_xU(t)R_n^J\|_{\dot{B}^0_{2,\infty}}=\|D^{s_k}_xR_n^J\|_{\dot{B}^0_{2,\infty}}.
\end{equation}
Putting together \eqref{DskR1} and \eqref{DskR2}, we conclude that
\begin{equation}\label{DskR3}
\limsup_{n\rightarrow \infty} \|D^{s_k}_xU(t)R_n^J\|_{L^{\infty}_t\dot{B}^0_{2,\infty}} \rightarrow 0,  \peqq \textrm{as} \peqq J\rightarrow \infty.
\end{equation}

Now, applying \eqref{RefSob} to $U(t)R_n^J(x)$ and using \eqref{PYTHA4} we have
\begin{equation*}
\begin{split}
\limsup_{n\rightarrow \infty}\|U(t)R_n^J\|_{L^{\infty}_tL^{k/2}_x}&\leq c \limsup_{n\rightarrow \infty}\|U(t)R_n^J\|^{4/k}_{L^{\infty}_t\dot{H}_x^{s_k}}\limsup_{n\rightarrow \infty}\|D^{s_k}_xU(t)R_n^J\|^{1-4/k}_{L^{\infty}_t\dot{B}^0_{2,\infty}} \\
&\leq c\limsup_{n\rightarrow \infty}\|u_n\|^{4/k}_{\dot{H}_x^{s_k}}\limsup_{n\rightarrow \infty}\|D^{s_k}_xU(t)R_n^J\|^{1-4/k}_{L^{\infty}_t\dot{B}^0_{2,\infty}}
\end{split}
\end{equation*}
Therefore the limit \eqref{DskR3} yields
\begin{equation*}
\limsup_{n\rightarrow \infty}\|U(t)R_n^J\|_{L^{\infty}_tL^{k/2}_x} \rightarrow 0,  \peqq \textrm{as} \peqq J\rightarrow \infty.
\end{equation*}
Moreover, arguing as in the beginning of the proof of Proposition \ref{1osc}, we also have
\begin{equation}\label{SPWNL5}
\limsup_{n\rightarrow \infty}\|D_x^{1/p}U(t)R_n^J\|_{L^{p}_tL^{q}_x} \rightarrow 0,  \peqq \textrm{as} \peqq J\rightarrow \infty,
\end{equation}
for all $\dot{H}_x^{s_k}$-admissible pair $(p,q)$, with $k+2<p<\infty$, satisfying \eqref{Adm}.

Let
\begin{equation}\label{Psi}
\Psi_n^j(y,s)=(h_n^j)^{2/k}\left(U((h_n^j)^3s)g_n^j\right)(h_n^jy).
\end{equation}
Since the sequence $(D^{s_k}_xg^j_n)_{n\in \mathbb{N}}$ is bounded in $L_x^2(\R)$ and ${{\bf h}^j}$-oscillatory the sequence $(D^{s_k}_x\Psi_n^j(y,0))_{n\in \mathbb{N}}$ is bounded in $L_x^2(\R)$ and ${\bf 1}$-oscillatory. Indeed, for $j\in \N$
$$
\|D^{s_k}_x\Psi_n^j(y,\cdot)\|_{L^{2}_x}=\|D^{s_k}_xg_n^j\|_{L^{2}_x}.
$$
Moreover
$$
\int_{|\xi|\leq \frac{1}{R}}|(D^{s_k}_x\Psi_n^j)^{\widehat{}}(\xi,0)|^2d \xi = \int_{h_n^j|\xi|\leq \frac{1}{R}}|(D^{s_k}_xg_n^j)^{\widehat{}}(\xi)|^2d \xi
$$
and
$$
\int_{|\xi|\geq {R}}|(D^{s_k}_x\Psi_n^j)^{\widehat{}}(\xi,0)|^2d \xi = \int_{h_n^j|\xi|\geq {R}}|(D^{s_k}_xg_n^j)^{\widehat{}}(\xi)|^2d \xi
$$
Also, note that $\Psi_n^j(y,s)=U(s)f_n^j(y)$, where $f_n^j(y)=(h_n^j)^{2/k}g_n^j(h_n^jy)$. Indeed
\begin{equation*}
\begin{split}
(\Psi_n^j)^{\widehat{}}(\xi,s)&=\frac{(h_n^j)^{2/k}}{h_n^j}\left(U((h_n^j)^3s)g_n^j\right)^{\widehat{}}\left(\frac{\xi}{h_n^j}\right)\\
&=\frac{(h_n^j)^{2/k}}{h_n^j}e^{is\xi^3}\widehat{g}_n^j\left(\frac{\xi}{h_n^j}\right)=\left(U(s)f_n^j\right)^{\widehat{}}(y).
\end{split}
\end{equation*}

For every $j\geq 1$ we apply Remark \ref{Remvn} to the sequence $(\Psi_n^j)_{n\in \mathbb{N}}$. Therefore, using a diagonal extraction and passing to a subsequence if necessary, we obtain a family $(\psi^{(j,\alpha)})_{\alpha\in \N}$ and a family $(y_n^{(j,\alpha)}, s_n^{(j,\alpha)})_{\alpha\in \N}$ such that
\begin{equation*}
\Psi_n^j(x,t)=\sum_{\alpha=1}^{A_j}U\left(t-s_n^{(j,\alpha)}\right)\psi^{(j,\alpha)}\left(x-y_n^{(j,\alpha)}\right) + U(t)W^{(j,A_j)}_n(x).
\end{equation*}
From \eqref{Psi} we have
$$
U(t)g_n^j(x)=\frac{1}{(h_n^j)^{2/k}}\Psi_n^j\left(\frac{x}{h_n^j}, \frac{t}{(h_n^j)^3}\right),
$$
and hence, we deduce that
\begin{equation}\label{Utgnj}
U(t)g_n^j(x)=\sum_{\alpha=1}^{A_j}\frac{1}{(h_n^j)^{2/k}}U\left(\frac{t-t_n^{(j,\alpha)}}{(h_n^j)^3}\right)\psi^{(j,\alpha)}\left(\frac{x-x_n^{(j,\alpha)}}{h_n^j}\right) + w^{(j,A_j)}_n(x,t),
\end{equation}
where
\begin{equation}\label{ynsn}
x_n^{(j,\alpha)}=h_n^jy_n^{(j,\alpha)},\,\,\,\, t_n^{(j,\alpha)}=(h_n^j)^3s_n^{(j,\alpha)}
\end{equation}
and
$$
w^{(j,A_j)}_n(x,t)=\frac{1}{(h_n^j)^{2/k}}U\left(\frac{t}{(h_n^j)^3}\right)W^{(j,A_j)}_n\left(\frac{x}{h_n^j}\right).
$$
We recall that $(D_x^{s_k}W^{(j,A_j)}_n)_{n\in\N}$ is ${\bf 1}$-oscillatory, and therefore $(D_x^{s_k} w^{(j,A_j)}_n)_{n\in\N}$ is ${{\bf h}^j}$-oscillatory.
Hence,  the last equation,  \eqref{Utgnj} and  \eqref{vn} yield
\begin{eqnarray*}
v_n(x,t)&=&\sum_{j=1}^{J}\left(\sum_{\alpha=1}^{A_j}\frac{1}{(h_n^j)^{2/k}}U\left(\frac{t-t_n^{(j,\alpha)}}{(h_n^j)^3}\right)\psi^{(j,\alpha)}\left(\frac{x-x_n^{(j,\alpha)}}{h_n^j}\right)\right) \\
&&+ w_n^{(J,A_1,\dots,A_J)}(x,t),
\end{eqnarray*}
where
$$
w_n^{(J,A_1,\dots,A_J)}(x,t)= \sum_{j=1}^Jw_n^{(j,A_j)}(x,t)+U(t)R_n^J(x)
$$

Next, we enumerate the pairs $(j,\alpha)$ by $m$ satisfying
\begin{equation}\label{m}
m(j,\alpha) < m(l,\beta)\,\, \textrm{ if } \,\, j+\alpha<l+\beta \,\, \textrm{ or } \,\, j+\alpha=l+\beta \,\, \textrm{ and }\,\, j<l.
\end{equation}

We first consider the asymptotic Pythagorean expansion \eqref{PYTHA}. Combining \eqref{PYTHA5} and Remark \ref{Remvn} we conclude that
\begin{equation}\label{PYTHA6}
\|v_n(\cdot, t)\|^2_{\dot{H}_x^{s_k}}-\sum_{j=1}^{J}\sum_{\alpha=1}^{A_j}\|\psi^{(j,\alpha)}\|^2_{\dot{H}_x^{s_k}} - \|w_n^{(J,A_1,\dots,A_J)}\|^2_{\dot{H}_x^{s_k}} =o(1),  \peqq \textrm{as} \peqq n\rightarrow \infty.
\end{equation}
where we have used Remark \ref{Orth}, the fact that  $(D_x^{s_k} w_n^{j,A_j})_{n\in \N }$ is ${{\bf h}^j}$-oscillatory, $(D_x^{s_k} R_n^J)_{n\in \N }$ is ${{\bf h}^j}$-singular for all $j\in \{1,\dots, J\}$ and ${{\bf h}^j}$, ${{\bf h}^{j'}}$ are orthogonal for every $j\neq j'$, to obtain
$$
\|w_n^{(J,A_1,\dots,A_J)}\|^2_{\dot{H}_x^{s_k}} = \sum_{j=1}^{J}\|w^{(j,A_j)}_n\|^2_{\dot{H}_x^{s_k}}+\|R_n^J\|^2_{\dot{H}_x^{s_k}}.
$$

Next, we prove that the family $((h_n^m)_{n\in \N}, (x_n^m)_{n\in \N},  (t_n^m)_{n\in \N})_{m\in \N}$ ($m$ given by \eqref{m}) is pairwise orthogonal in the sense that relation \eqref{XT} holds. We have two possibilities.

First, if
$$
(h_n^m, x_n^m, t_n^m)=(h_n^i, x_n^{(i,\alpha)}, t_n^{(i,\alpha)}) \textrm{ and } (h_n^j, x_n^j, t_n^j)=(h_n^l, x_n^{(l,\beta)}, t_n^{(l,\beta)}), \textrm{ with } i\neq l
$$
then by Proposition \ref{GeR} we have
\begin{equation*}
\lim_{n\rightarrow \infty}
\frac{h^i_n}{h^l_n} + \frac{h^l_n}{h^i_n} =+\infty.
\end{equation*}

Second, if
$$
(h_n^m, x_n^m, t_n^m)=(h_n^l, x_n^{(l,\alpha)}, t_n^{(l,\alpha)}) \textrm{ and } (h_n^j, x_n^j, t_n^j)=(h_n^l, x_n^{(l,\beta)}, t_n^{(l,\beta)}), \textrm{ with } \alpha\neq \beta
$$
then
\begin{equation}\label{ynsn2}
\left|\dfrac{x_n^{(l,\alpha)}-x_n^{(l,\beta)}}{h_n^l}\right|+\left|\dfrac{t_n^{(l,\alpha)}-t_n^{(l,\beta)}}{(h_n^l)^{3}}\right|=
\left|y_n^{(l,\alpha)}-y_n^{(l,\beta)}\right|+\left|s_n^{(l,\alpha)}-s_n^{(l,\beta)}\right|
\end{equation}
where we have used the relations \eqref{ynsn}. Therefore, the right hand side of \eqref{ynsn2} goes to infinity, as $n\rightarrow \infty$, by \eqref{XT2}.

Finally, we consider the smallness of the remainder in the Strichartz norm \eqref{SPWNL}. We start with the $\dot{H}_x^{s_k}$-admissible pair $p=q=3k/2$. By our definition of the enumeration $m$ \eqref{m}, it suffices to show
\begin{equation}\label{SPWNL4}
\limsup_{n\rightarrow \infty} \|D_x^{2/3k}w_n^{(J,A_1,\dots,A_J)}\|_{L^{3k/2}_{t,x}} \rightarrow 0,  \peqq \textrm{as} \peqq \inf_{1\leq j\leq J}\{J, j+A_j\}\rightarrow \infty,
\end{equation}

Let $\varepsilon>0$. By the limit \eqref{SPWNL5} with $p=q=3k/2$, there exists $J_0\geq 1$ such that for all $J\geq J_0$
\begin{equation}\label{SPWNL6}
\limsup_{n\rightarrow \infty} \|D_x^{2/3k}U(t)R_n^J\|_{L^{3k/2}_{t,x}} \leq \varepsilon/3.
\end{equation}
Moreover, by Remark \ref{Remvn}, for all $J\geq J_0$, there exists $B_J$ such that for all $A_j\geq B_J$ and each $j\in \{1,\dots, J\}$
\begin{equation}\label{SPWNL7}
\limsup_{n\rightarrow \infty} \|D_x^{2/3k}w_n^{(j,A_j)}\|_{L^{3k/2}_{t,x}} \leq \varepsilon/3J.
\end{equation}

Now, we decompose the remainder $w_n^{(J,A_1,\dots,A_J)}$ in the following way
\begin{eqnarray*}
w_n^{(J,A_1,\dots,A_J)}(x,t)&=& U(t_n)R_n^{J}(x) +\sum_{1\leq j\leq J, A_j<B_J} w_n^{(j,A_j)}(x,t)+ \\
&&+\sum_{1\leq j\leq J, A_j\geq B_J} w_n^{(j,A_j)}(x,t)\\
&=&U(t_n)R_n^{J}(x) +\sum_{1\leq j\leq J, A_j<B_J} w_n^{(j,B_J)}(x,t)\\
&&+ \sum_{1\leq j\leq J, A_j\geq B_J} w_n^{(j,A_j)}(x,t) +r_n^{(J,A_1,\dots,A_J)}(x,t),
\end{eqnarray*}
where
\begin{equation*}
r_n^{(J,A_1,\dots,A_J)}(x,t)=\sum_{1\leq j\leq J, A_j<B_J} (w_n^{(j,A_j)}(x,t)-w_n^{(j,B_J)}(x,t))
\end{equation*}
Using the expression \eqref{Utgnj}, we can rewrite the last identity as follows
\begin{equation*}
r_n^{(J,A_1,\dots,A_J)}(x,t)=\!\!\!\!\!\sum_{1\leq j\leq J, A_j<B_J}\sum_{\alpha=A_j+1}^{B_J} \frac{1}{(h_n^j)^{2/k}}U\left(\frac{t-t_n^{(j,\alpha)}}{(h_n^j)^3}\right)\psi^{(j,\alpha)}\left(\frac{x-x_n^{(j,\alpha)}}{h_n^j}\right)\!\!.
\end{equation*}
From inequalities \eqref{SPWNL6} and \eqref{SPWNL7} we infer that
\begin{equation}\label{SPWNL8}
\limsup_{n\rightarrow \infty} \|D_x^{2/3k}w_n^{(J,A_1,\dots,A_J)}\|_{L^{3k/2}_{t,x}} \leq 2\varepsilon/3+\limsup_{n\rightarrow \infty} \|D_x^{2/3k}r_n^{(J,A_1,\dots,A_J)}\|_{L^{3k/2}_{t,x}}
\end{equation}
We claim that
\begin{equation}\label{SPWNL9}
\limsup_{n\rightarrow \infty} \|D_x^{2/3k}r_n^{(J,A_1,\dots,A_J)}\|^{3k/2}_{L^{3k/2}_{t,x}}=\sum_{1\leq j\leq J, A_j<B_J}\sum_{\alpha=A_j+1}^{B_J}\|D_x^{2/3k}U(t)\psi^{(j,\alpha)}(x)\|_{L^{3k/2}_{t,x}}^{3k/2}
\end{equation}
Assuming the claim for a moment, we can use the Strichartz estimate \eqref{STR} to obtain
\begin{equation}\label{SPWNL10}
\limsup_{n\rightarrow \infty} \|D_x^{2/3k}r_n^{(J,A_1,\dots,A_J)}\|_{L^{3k/2}_{t,x}}\leq c \left(\sum_{1\leq j\leq J, A_j<\alpha}\|\psi^{(j,\alpha)}\|^{3k/2}_{\dot{H}_x^{s_k}}\right)^{2/3k},
\end{equation}
where the constant $c>0$ is independent of $j, \alpha, A_j$.
By relation \eqref{PYTHA6}, the series $\sum_{j,\alpha}\|\psi^{(j,\alpha)}\|^{2}_{\dot{H}_x^{s_k}}$ is convergent and since $3k/4>1$ for $k\geq 4$, the series $\sum_{j,\alpha}\|\psi^{(j,\alpha)}\|^{3k/2}_{\dot{H}_x^{s_k}}$ also converges. Thus, the right hand side of \eqref{SPWNL10} can be taken sufficiently small if $\inf_{1\leq j\leq J}\{J, j+A_j\}$ is large enough. Combining this information with inequality \eqref{SPWNL8} we conclude
\begin{equation*}
\limsup_{n\rightarrow \infty} \|D_x^{2/3k}w_n^{(J,A_1,\dots,A_J)}\|_{L^{3k/2}_{t,x}} \leq \varepsilon,
\end{equation*}
for $\inf_{1\leq j\leq J}\{J, j+A_j\}$ large enough, which implies the limit \eqref{SPWNL4}.

Now we consider the smallness of the remainder in the Strichartz norm \eqref{SPWNL}, for every $\dot{H}_x^{s_k}$-admissible pair $(p,q)$ satisfying \eqref{Adm}, with $k+2<p<\infty$. Indeed, arguing as in the beginning of the proof of Proposition \ref{1osc} interpolating  \eqref{STR} and \eqref{STR2} if $k+2<p<3k/2$ or interpolating \eqref{STR} and \eqref{STR1} if $3k/2<p<\infty$ we obtain the desired conclusion \eqref{SPWNL}.

To complete the proof, it remains to prove the claim \eqref{SPWNL9}. Since the family $((h_n^m)_{n\in \N}, (x_n^m)_{n\in \N},  (t_n^m)_{n\in \N})_{m\in \N}$ are pairwise orthogonal, it is sufficient to prove the following equality
\begin{equation*}
\limsup_{n\rightarrow \infty} \left\|D_x^{2/3k}\sum_{j=1}^{J}\dfrac{1}{(h_n^j)^{2/k}}U\left(\dfrac{t-t_n^j}{(h_n^j)^{3}}\right)\psi^j\left(\dfrac{x-x_n^j}{h_n^j}\right)\right\|^{3k/2}_{L^{3k/2}_{t,x}}
\end{equation*}
\begin{equation*}
=\sum_{ j=1}^{J}\|D_x^{2/3k}U(t)\psi^{j}(x)\|_{L^{3k/2}_{t,x}}^{3k/2},
\end{equation*}
if $((h_n^j)_{n\in \N}, (x_n^j)_{n\in \N},  (t_n^j)_{n\in \N})_{j\in \N}$ are pairwise orthogonal.

To simplify the notation, let
\begin{equation}\label{Not1}
V^j_n(x,t)=\dfrac{1}{(h_n^j)^{2/k}}U\left(\dfrac{t-t_n^j}{(h_n^j)^{3}}\right)\psi^j\left(\dfrac{x-x_n^j}{h_n^j}\right)
\end{equation}
and
\begin{equation}\label{Not2}
V^j(x,t)=U(t)\psi^j(x).
\end{equation}
It is clear that
$$
V^j_n(x,t)=\dfrac{1}{(h_n^j)^{2/k}}V^j\left(\dfrac{x-x_n^j}{h_n^j}, \dfrac{t-t_n^j}{(h_n^j)^{3}}\right).
$$
Moreover, a simple computation shows
\begin{equation}\label{chain_ruleDx}
D_x^{2/3k}V^j_n(x,t)=\dfrac{1}{(h_n^j)^{2/k+2/3k}}(D_x^{2/3k}V^j)\left(\dfrac{x-x_n^j}{h_n^j}, \dfrac{t-t_n^j}{(h_n^j)^{3}}\right)
\end{equation}
and
\begin{equation}\label{Vjn}
\left\|D_x^{2/3k}V^j_n\right\|_{L^{3k/2}_{t,x}}=\left\|D_x^{2/3k}V^j\right\|_{L^{3k/2}_{t,x}}.
\end{equation}
We will need the following elementary inequality (see G\'erard \cite{Ge98})
\begin{equation}\label{GeL}
\left| | \sum_{j=1}^{J} a_j |^p - \sum_{j=1}^{J}\left| a_j\right|^{p}\right|\leq c_J\sum_{j\neq l} |a_j||a_l|^{p-1},
\end{equation}
for all $p\in [2,\infty)$. Hence, combining \eqref{Vjn} and \eqref{GeL}, we obtain
\begin{equation*}
\left| \left\|D_x^{2/3k}\sum_{j=1}^{J} V^j_n\right\|^{3k/2}_{L^{3k/2}_{t,x}} - \sum_{j=1}^{J}\left\|D_x^{2/3k}V^j\right\|^{3k/2}_{L^{3k/2}_{t,x}}\right|
\end{equation*}
\begin{equation*}
\leq c_J\sum_{j\neq l}\int \int  |D_x^{2/3k}V^j_n||D_x^{2/3k}V^l_n|^{3k/2-1}dxdt.
\end{equation*}

Next, we prove that the right hand side of the last inequality goes to zero as $n\rightarrow \infty$. Indeed, since $((h_n^j)_{n\in \N}, (x_n^j)_{n\in \N},  (t_n^j)_{n\in \N})_{j\in \N}$ are pairwise orthogonal, we have either
\begin{equation}\label{hnxntn}
\lim_{n\rightarrow \infty}
\frac{h^j_n}{h^l_n} + \frac{h^l_n}{h^j_n} =+\infty
\end{equation}
or
\begin{equation}\label{hnxntn2}
h^j_n=h^l_n \,\, \textrm{ and } \,\, \lim_{n\rightarrow \infty} \left|\dfrac{x_n^{j}-x_n^{l}}{h_n^j}\right|+\left|\dfrac{t_n^{j}-t_n^{l}}{(h_n^j)^{3}}\right|= \infty.
\end{equation}

First,  suppose that $D_x^{2/3k}V^j$,  $j\in\{1,\dots,J\}$ are continuous and compactly supported.  If \eqref{hnxntn} holds,  without loss of generality, we assume
\begin{equation}\label{hnhl}
\frac{h^j_n}{h^l_n} \rightarrow \infty,
\end{equation}
as $n\rightarrow \infty$ (the other case is analogous). Equation (\ref{chain_ruleDx}) and the change of variables $x=h_n^ly+x_n^l$ and $t=(h_n^l)^3s+t_n^l$ yields
\begin{equation*}
\int \int  |D_x^{2/3k}V^j_n(x,t)||D_x^{2/3k}V^l_n(x,t)|^{3k/2-1}dxdt
\end{equation*}
\begin{equation*}
\begin{split}
&= \left(\frac{h_n^l}{h_n^j}\right)^{8/3k} \int \int  |D_x^{2/3k}V^j\left(y', s'\right)||D_x^{2/3k}V^l(y,s)|^{3k/2-1}dyds\\
&\leq c \left(\frac{h_n^l}{h_n^j}\right)^{8/3k},
\end{split}
\end{equation*}
where $y'=\frac{h_n^l}{h_n^j}y+\frac{x_n^{l}-x_n^{j}}{h_n^j}$ and $s'=\left(\frac{h_n^l}{h_n^j}\right)^3s+\frac{t_n^{l}-t_n^{j}}{(h_n^j)^3}$. In the last line we have used the fact that $D_x^{2/3k}V^j$, $j\in\{1,\dots,J\}$ are continuous and compactly supported. By \eqref{hnhl}, the last term of the above inequality goes to zero as $n\rightarrow \infty$.

Next, assume that \eqref{hnxntn2} holds. Since $h^j_n=h^l_n$ for all $n\in \N$ in this case, the same computations yield
\begin{equation*}
\int \int  |D_x^{2/3k}V^j_n(x,t)||D_x^{2/3k}V^l_n(x,t)|^{3k/2-1}dxdt
\end{equation*}
\begin{equation*}
\begin{split}
&= \int \int  |D_x^{2/3k}V^j\left(y+\dfrac{x_n^{l}-x_n^{j}}{h_n^j}, s+\dfrac{t_n^{l}-t_n^{j}}{(h_n^j)^3} \right)||D_x^{2/3k}V^l(y,s)|^{3k/2-1}dyds.
\end{split}
\end{equation*}
Since $D_x^{2/3k}V^j$, $j\in\{1,\dots,J\}$ are continuous and compactly supported, in view of \eqref{hnxntn2}, we can apply the Lebesgue's Dominated Convergence, to conclude that the last line of the above inequality goes to zero as $n\rightarrow \infty$. Therefore,   we have
$$
\sum_{j\neq l}\int \int  |D_x^{2/3k}V^j_n||D_x^{2/3k}V^l_n|^{3k/2-1}dxdt \rightarrow 0,
$$
as $n\rightarrow \infty$, which leads to the desired claim \eqref{SPWNL9} if $D_x^{2/3k}V^j$, $j\in\{1,\dots,J\}$ are continuous and compactly supported

Finally, in the general case we use  a density argument as follows. Let $V^j_\eps$  be a function such that  $D_x^{2/3k}V^j_\eps$ is  continuous, compactly  supported and  converges to $D_x^{2/3k}V^j$ in  ${L^{3k/2}_{t,x}}$ when $\eps$ goes to zero. Then, from  (\ref{chain_ruleDx}) we conclude that
$$
\left\|D_x^{2/3k} \frac{1}{(h_n^j)^{2/k }}   V^j_\eps \left(\dfrac{x-x_n^j}{h_n^j}, \dfrac{t-t_n^j}{(h_n^j)^{3}}\right) -D_x^{2/3k}    V^j_n(x,t) \right\|^{3k/2}_{L^{3k/2}_{t,x}}
$$
\begin{equation}\label{conv_D_en_unif}
= \left\|D_x^{2/3k}V^j_\eps - D_x^{2/3k} V^j\right\|^{3k/2}_{L^{3k/2}_{t,x}}.
\end{equation}
Hence,  the convergence when $\epsilon$ goes to zero of the first term on the left-hand side of the last equation is uniform in $n$. Also, since  $D_x^{2/3k}V^j_\eps$ is  continuous, and compactly  supported we have
$$
\limsup_{n\rightarrow \infty} \left\|D_x^{2/3k}\sum_{j=1}^{J} (h_n^j)^{2/k }   V^j_\eps \left(\dfrac{x-x_n^j}{h_n^j}, \dfrac{t-t_n^j}{(h_n^j)^{3}}\right) \right\|^{3k/2}_{L^{3k/2}_{t,x}} = \sum_{j=1}^{J}\left\|D_x^{2/3k} V^j_\eps \right\|^{3k/2}_{L^{3k/2}_{t,x}}.
$$
Finally, taking the limit $\eps \ra 0 $, interchanging the order of the limits and using (\ref{conv_D_en_unif}) we complete the proof of Theorem \ref{profdec}.

\end{proof}

\begin{remark}\label{Vnj}
 Let $V^j_n$ be defined by \eqref{Not1}, arguing similarly as in the end of the proof of Theorem \ref{profdec}, we can also prove that
\begin{equation}\label{VnjVnl}
\|D^{1/p}_xV^j_nD^{1/p}_xV^l_n\|_{L_t^{p/2}L_x^{q/2}} \rightarrow 0,
\end{equation}
as $n\rightarrow \infty$, for every $\dot{H}_x^{s_k}$-admissible pair $(p,q)$ satisfying \eqref{Adm}, with $k+2<p<\infty$.

Indeed, applying again the change of variables $x=h_n^ly+x_n^l$ and $t=(h_n^l)^3s+t_n^l$ we have
$$
\|D^{1/p}_xV^j_nD^{1/p}_xV^l_n\|_{L_t^{p/2}L_x^{q/2}} 
$$
\begin{eqnarray*}
&=& \left(\int \left(\int  |D^{1/p}_xV^j_n(x,t)D^{1/p}_xV^l_n(x,t)|^{q/2}dx\right)^{p/q}dt\right)^{2/p}\\
&=& \left(\frac{h_n^l}{h_n^j}\right)^{\frac{1}{q}+\frac{3}{p}} \left(\int \left(\int  |D^{1/p}_xV^j\left(y', s' \right)D^{1/p}_xV^l(y,s)|^{q/2}dy\right)^{p/q}ds\right)^{2/p}\!\!\!\!\!\!\!\!,
\end{eqnarray*}
where $y'=\frac{h_n^l}{h_n^j}y+\frac{x_n^{l}-x_n^{j}}{h_n^j}$ and $s'= \left(\frac{h_n^l}{h_n^j}\right)^3s+\frac{t_n^{l}-t_n^{j}}{(h_n^j)^3}$, which implies \eqref{VnjVnl}, if $((h_n^j)_{n\in \N}, (x_n^j)_{n\in \N},  (t_n^j)_{n\in \N})$ and $((h_n^l)_{n\in \N}, (x_n^l)_{n\in \N},  (t_n^l)_{n\in \N})$ are orthogonal.

\end{remark}

\section{Existence of Maximizers}\label{mainresult}
We start this section with the following auxiliary lemma.

\begin{lemma}\label{mainlemma}
 Let $(h_n^j, x_n^j, t_n^j)_{n\in \mathbb{N}, j\in \mathbb{N}}$ be a sequence  satisfying the condition \eqref{XT} and $(p,q)$ be  a pair satisfying \eqref{Adm}  with $k+2<p<\infty$. Then, for all $J\geq 1$,
 $$
 \limsup_{n\rightarrow \infty} \left\|D_x^{1/p}\left[\sum_{j=1}^{J}\dfrac{1}{(h_n^j)^{2/k}}U\left(\dfrac{t-t_n^j}{(h_n^j)^{3}}\right)\psi^j\left(\dfrac{x-x_n^j}{h_n^j}\right)\right]\right\|^B_{L^p_tL^{q}_x}
 $$
\begin{equation}\label{D1}
 \leq  \sum_{j=1}^{J}\left\|D_x^{1/p}U(t)\psi^j\right\|^B_{L^p_tL^{q}_x},
\end{equation}
where $B=\min\{p,q\}$.

Also,
$$
\limsup_{n\rightarrow \infty} \left\|\sum_{j=1}^{J}\dfrac{1}{(h_n^j)^{2/k}}U\left(\dfrac{t-t_n^j}{(h_n^j)^{3}}\right)\psi^j\left(\dfrac{x-x_n^j}{h_n^j}\right)\right\|^{5k/4}_{L^{5k/4}_xL^{5k/2}_t} 
$$
\begin{equation}\label{D2}
\leq  \sum_{j=1}^{J}\left\|U(t)\psi^j\right\|^{5k/4}_{L^{5k/4}_xL^{5k/2}_t}.
\end{equation}
\end{lemma}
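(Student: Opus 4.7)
The plan is to combine the elementary pointwise inequality (\ref{GeL}) with the cross-product decay provided by Remark \ref{Vnj} (and its natural analogue for the $L^{5k/4}_xL^{5k/2}_t$ setting).

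I will start with (\ref{D1}). Write $F_n^j = D_x^{1/p}V_n^j$ where $V_n^j$ is as in (\ref{Not1}). A change of variables exactly as in (\ref{Vjn}) gives $\|F_n^j\|_{L^p_tL^q_x}=\|D_x^{1/p}U(t)\psi^j\|_{L^p_tL^q_x}$, so the RHS of (\ref{D1}) is stable under the profile decomposition. Since $k>4$ forces $p>k+2>2$ and $q>k/2>2$, we have $B=\min\{p,q\}\ge 2$, so (\ref{GeL}) applies pointwise with exponent $B$:
\begin{equation*}
\Bigl|\sum_{j=1}^{J}F_n^j(x,t)\Bigr|^{B} \le \sum_{j=1}^{J}|F_n^j(x,t)|^{B} + c_J\sum_{j\ne l}|F_n^j(x,t)|\,|F_n^l(x,t)|^{B-1}.
\end{equation*}
Because $p/B,\,q/B\ge 1$, the mixed space $L^{p/B}_tL^{q/B}_x$ is a Banach space, and the identity $\|f\|_{L^p_tL^q_x}^{B}=\||f|^{B}\|_{L^{p/B}_tL^{q/B}_x}$ together with the triangle inequality yields
\begin{equation*}
\Bigl\|\sum_{j}F_n^j\Bigr\|_{L^p_tL^q_x}^{B} \le \sum_{j}\|F_n^j\|_{L^p_tL^q_x}^{B} + c_J\sum_{j\ne l}\bigl\||F_n^j|\,|F_n^l|^{B-1}\bigr\|_{L^{p/B}_tL^{q/B}_x}.
\end{equation*}

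The second step is to absorb the cross-terms. Writing $|F_n^j|\,|F_n^l|^{B-1}=|F_n^jF_n^l|\cdot|F_n^l|^{B-2}$ and applying H\"older in $t$ with exponents $(p/2, p/(B-2))$ and in $x$ with exponents $(q/2, q/(B-2))$ (both valid since $B\ge 2$) gives
\begin{equation*}
\bigl\||F_n^j|\,|F_n^l|^{B-1}\bigr\|_{L^{p/B}_tL^{q/B}_x} \le \|F_n^jF_n^l\|_{L^{p/2}_tL^{q/2}_x}\cdot\|F_n^l\|_{L^p_tL^q_x}^{B-2}.
\end{equation*}
The last factor is uniformly bounded, while the first tends to $0$ as $n\to\infty$ exactly by Remark \ref{Vnj}, which is precisely the cross-product decay furnished by the pairwise divergence (\ref{XT}). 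Taking $\limsup_{n\to\infty}$ yields (\ref{D1}).

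For (\ref{D2}) the scheme is identical once we set $B=5k/4=\min\{5k/4,5k/2\}\ge 2$ and apply (\ref{GeL}) pointwise to $a_j=V_n^j(x,t)$. Noting that $(5k/4)/B=1$ and $(5k/2)/B=2$, the triangle inequality in $L^1_xL^2_t$ produces diagonal terms $\sum_j\|U(t)\psi^j\|_{L^{5k/4}_xL^{5k/2}_t}^{5k/4}$ (again by scale invariance) plus cross-terms. H\"older with exponents $(5k/8,\,5k/(5k-8))$ in $x$ and $(5k/4,\,10k/(5k-8))$ in $t$ reduces each cross-term to $\|V_n^jV_n^l\|_{L^{5k/8}_xL^{5k/4}_t}$ times a uniformly bounded factor. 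The main technical point is showing that this product norm tends to zero when $j\ne l$; this is proved by the exact argument used in Remark \ref{Vnj}: after the change of variables $x=h_n^ly+x_n^l$, $t=(h_n^l)^3s+t_n^l$, a density reduction to continuous compactly supported profiles, and an appeal to the pairwise divergence (\ref{XT}), the cross-norm picks up a vanishing scale factor or an integral over shifted compact supports that becomes disjoint in the limit. The main obstacle is bookkeeping in this final step, but it mirrors exactly the calculation already executed in Remark \ref{Vnj} and at the end of the proof of Theorem \ref{profdec}.
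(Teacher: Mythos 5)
Your argument is correct, and it reaches the conclusion by a cleaner route than the paper's own proof, while using the same essential inputs. The paper follows the B\'egout--Vargas/Shao/Bulut template: it writes $\|\cdot\|^B_{L^p_tL^q_x}$ as $\||\cdot|^q\|^{B/q}_{L^{p/q}_tL^1_x}$, expands the $q$-th power of the sum as a triple product, and then performs a case analysis with $s=\lfloor q-2\rfloor$, producing the terms $I_j$, $II_j$, $Ia$, $Ib$, $Ic$, each estimated by H\"older so that every off-diagonal contribution carries a factor $\|D_x^{1/p}V_n^jD_x^{1/p}V_n^l\|_{L^{p/2}_tL^{q/2}_x}$, which vanishes by Remark \ref{Vnj}, with the remaining factors bounded via \eqref{STR0} and \eqref{PYTHA}. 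You instead apply the elementary inequality \eqref{GeL} pointwise at exponent $B=\min\{p,q\}$ (legitimate since admissibility \eqref{Adm} with $k+2<p<\infty$ forces $p,q>k/2>2$), use that $\|f\|^B_{L^p_tL^q_x}=\||f|^B\|_{L^{p/B}_tL^{q/B}_x}$ with $p/B,q/B\ge1$ so the triangle inequality applies, and then control each cross term by a single mixed-norm H\"older, $\||F_n^j||F_n^l|^{B-1}\|_{L^{p/B}_tL^{q/B}_x}\le\|F_n^jF_n^l\|_{L^{p/2}_tL^{q/2}_x}\|F_n^l\|^{B-2}_{L^p_tL^q_x}$, reducing everything to exactly the same bilinear decay of Remark \ref{Vnj} plus the scale-invariance identity \eqref{Vjn2} and the Strichartz bound; this avoids the floor-function bookkeeping entirely. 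For \eqref{D2} your scheme is likewise sound: with $B=5k/4$ the quotient space is $L^1_xL^2_t$, your H\"older exponents are the right ones, the bounded factor is controlled by \eqref{STR3}, and the needed decay of $\|V_n^jV_n^l\|_{L^{5k/8}_xL^{5k/4}_t}$ follows from the change-of-variables/density/divergence argument of Remark \ref{Vnj} (the scaling produces a factor $(h_n^l/h_n^j)^{2/k}$, and the equal-scale case is handled by dominated convergence), which is the same level of detail as the paper's ``straightforward modifications'' remark. In short: same key lemmas (Remark \ref{Vnj}, scale invariance, \eqref{STR0}, \eqref{STR3}), but a simpler and more transparent decomposition of the $B$-th power, made possible by the fact that $B>2$ in this range of exponents.
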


 Lemma \ref{mainlemma} is similar to Lemma 5.5 from \cite{BeVa07}, Lemma 1.6 from \cite{Sh091},  Lemma 5.1 from  \cite{Sh092}, and Lemma 3.11 from \cite{Bu10}.  It  follows from Remark \ref{Vnj} and arguments similar to the ones used in the proofs of the just mentioned Lemmas, which we present here for the readers convenience.

\begin{proof}[Proof of Lemma \ref{mainlemma}]
We first observe that for $f, \; g \in L^{p/q}_xL^1_t$
$$
\|f+g\|^{B/q}_{L^{p/q}_tL^1_x} \leq \|f\|^{B/q}_{L^{p/q}_tL^1_x} +\|g\|^{B/q}_{L^{p/q}_tL^1_x}.
$$
Again, we use the notation \eqref{Not1} and \eqref{Not2}. Hence,
\begin{eqnarray*}
\left\|D^{1/p} \sum_{j=1}^J V_n^j\right\|_{L^{p}_tL^q_x}^B &=& \left\| \left|D_x^{1/p} \sum_{j=1}^J V_n^j\right|^q \right\|_{L^{p/q}_tL^1_x}^{B/q}\\
& =&
\left\| \left|D_x^{1/p} \sum_{j=1}^J V_n^j\right| \left|D_x^{1/p} \sum_{i=1}^J V_n^i\right| \left|D_x^{1/p} \sum_{k=1}^J V_n^k\right|^{q-2} \right\|_{L^{p/q}_tL^1_x}^{B/q}\\
&\leq & \left\|  \sum_{j=1}^J \left| D_x^{1/p} V_n^j\right|   \left(\sum_{i=1}^J \left| D_x^{1/p} V_n^i\right| \right)\left|D_x^{1/p}\sum_{k=1}^J V_n^k\right|^{q-2} \right\|_{L^{p/q}_tL^1_x}^{B/q} \\
&\leq & \sum_{j=1}^J \left[ \left\|    \left| D_x^{1/p} V_n^j\right|^2 \left|D_x^{1/p} \sum_{k=1}^J V_n^k\right|^{q-2} \right\|_{L^{p/q}_tL^1_x}^{B/q}\right.\\
&&+   \left. \left\| \sum_{i\neq j, i\leq J} \left| D_x^{1/p} V_n^i\right|\left| D_x^{1/p} V_n^j\right| \left|D_x^{1/p} \sum_{k=1}^J V_n^k\right|^{q-2}  \right\|_{L^{p/q}_tL^1_x}^{B/q} \right] \\
&=&\sum_{j=1}^J I_j+ II_j.
\end{eqnarray*}

We first consider the term $II_j$. Since $\frac{q}{p}=\frac{2}{p}+\frac{q-2}{p}$ and $1=\frac{2}{q}+\frac{q-2}{q}$, we apply H\"older's inequality to obtain
\begin{eqnarray}
II_{j}&\leq & \sum_{i\neq j, i\leq J}\left\|    \left| D_x^{1/p} V_n^j D_x^{1/p} V_n^i \right|\left| \sum_{k=1}^J D_x^{1/p} V_n^k \right|^{q-2} \right\|_{L^{p/q}_tL^{1}_x}^{B/q} \nonumber \\
&\leq & \sum_{i\neq j, i\leq J}\left\|     D_x^{1/p} V_n^j D_x^{1/p} V_n^i  \right\|_{L^{p/2}_tL^{q/2}_x}^{B/q}
\left\|    \left| \sum_{k=1}^J D_x^{1/p} V_n^k \right|^{q-2} \right\|_{L^{\frac{p}{q-2}}_tL^{\frac{q}{q-2}}_x}^{B/q} \nonumber \\
&\leq & \sum_{i\neq j, i\leq J}\left\|     D_x^{1/p} V_n^j D_x^{1/p} V_n^i  \right\|_{L^{p/2}_tL^{q/2}_x}^{B/q}
\left\|     \sum_{k=1}^J D_x^{1/p} V_n^k  \right\|_{L^{p}_tL^{q}_t}^{B(q-2)/r}. \label{d:LDes:eq_II}
\end{eqnarray}

Next, we study the term $I_j$. Set $s$ as the greatest integer less or equal than  $q-2$. Hence, $0\leq q-2-s<1$ and for $j=1,\dots, J$
\begin{eqnarray*}
 I_j &\leq &   \left\|    \left| D_x^{1/p} V_n^j\right|^2  \left(\sum_{i=1}^J \left| D_x^{1/p}  V_n^i\right|\right)^{q-2-s} \left|D_x^{1/p} \sum_{k=1}^J V_n^k\right|^{s} \right\|_{L^{p/q}_tL^1_x}^{B/q} \\
 & \leq &  \left\|    \left| D_x^{1/p} V_n^j\right|^2  \left(\sum_{i=1}^J \left| D_x^{1/p}  V_n^i\right|^{q-2-s} \right) \left|D_x^{1/p} \sum_{k=1}^J V_n^k\right|^{s} \right\|_{L^{p/q}_tL^1_x}^{B/q} \\
 & \leq & \left\|    \left| D_x^{1/p} V_n^j\right|^{q-s}  \left|D_x^{1/p} \sum_{k=1}^J V_n^k\right|^{s} \right\|_{L^{p/q}_tL^1_x}^{B/q} \\
 & & ~~~~~ +
 \sum_{i\neq j, i\leq J}^J \left\|    \left| D_x^{1/p} V_n^j\right|^{4+s-q}   \left|D_x^{1/p} V_n^j D_x^{1/p}  V_n^i\right|^{q-2-s}  \left|D_x^{1/p} \sum_{k=1}^J V_n^k\right|^{s} \right\|_{L^{p/q}_tL^1_x}^{B/q}.
\end{eqnarray*}
As a consequence,
\begin{eqnarray}
 I_j &\leq &  \!\! \left\|    \left| D_x^{1/p} V_n^j\right|^{q-s}  \left( \sum_{k=1}^J \left|D_x^{1/p}  V_n^k\right|^{s} + \!\!\!\! \sum_{(k_1,\dots,k_s)} \left|D_x^{1/p}  V_n^{k_1}\right| \ldots \left|D_x^{1/p}  V_n^{k_s}\right|
 \right) \right\|_{L^{p/q}_tL^1_x}^{B/q} \nonumber \\
 & & ~~~~~ +
 \sum_{i\neq j, i\leq J}^J \left\|    \left| D_x^{1/p} V_n^j\right|^{4+s-q}   \left|D_x^{1/p} V_n^j D_x^{1/p}  V_n^i\right|^{q-2-s}  \left|D_x^{1/p} \sum_{k=1}^J V_n^k\right|^{s} \right\|_{L^{p/q}_tL^1_x}^{B/q} \nonumber \\
 &\leq &  \sum_{k=1}^J \left\|    \left| D_x^{1/p} V_n^j\right|^{q-s}    \left|D_x^{1/p}  V_n^k\right|^{s} \right\|_{L^{p/q}_xL^1_t}^{B/q} \nonumber \\
 & & ~~~~~ + \sum_{(k_1,\ldots,k_s)} \left\| \left| D_x^{1/p} V_n^j\right|^{q-s}\left|D_x^{1/p}  V_n^{k_1}\right|\ldots \left|D_x^{1/p}  V_n^{k_s}\right|
 \right\|_{L^{p/q}_tL^1_x}^{B/q} \nonumber \\
 & & ~~~~~ +
 \sum_{i\neq j, i\leq J}^J \left\|    \left| D_x^{1/p} V_n^j\right|^{4+s-q}   \left|D_x^{1/p} V_n^j D_x^{1/p}  V_n^i\right|^{q-2-s}  \left|D_x^{1/p} \sum_{k=1}^J V_n^k\right|^{s} \right\|_{L^{p/q}_tL^1_x}^{B/q} \nonumber \\
&\leq  & \left\|\left| D_x^{1/p} V_n^j\right|^q\right\|_{L^{p/q}_xL^1_t}^{B/q}+ \sum_{k\neq j, k\leq J} {Ia}_{j,k} \nonumber \\
& & ~~~~~+\sum_{(k_1,\dots,k_s)} Ib_{j,k_1,\dots,k_s} +  \sum_{i\neq j, i\leq J} Ic_{j,i}. \label{d:LDes:eq_I_j}
\end{eqnarray}
where
\begin{equation*}
\begin{split}
{Ia}_{j,k} &= \left\|    \left| D_x^{1/p} V_n^j\right|^{q-s}   \left|D_x^{1/p} V_n^k \right|^{s} \right\|_{L^{p/q}_tL^1_x}^{B/q},\\
Ib_{j,k_1,\dots,k_s} &= \left\| \left| D_x^{1/p} V_n^j\right|^{q-s}\left|D_x^{1/p}  V_n^{k_1}\right|\ldots \left|D_x^{1/p}  V_n^{k_s}\right| \right\|_{L^{p/q}_tL^1_x}^{B/q},\\
Ic_{j,i} &= \left\|    \left| D_x^{1/p} V_n^j\right|^{4+s-q}   \left|D_x^{1/p} V_n^j D_x^{1/p}  V_n^i\right|^{q-2-s}  \left|D_x^{1/p} \sum_{k=1}^J V_n^k\right|^{s} \right\|_{L^{p/q}_tL^1_x}^{B/q}.
\end{split}
\end{equation*}

First note that a simple computation yields
\begin{equation}\label{Vjn2}
\left\|\left| D_x^{1/p} V_n^j\right|^q\right\|_{L^{p/q}_xL^1_t}^{B/q}=\left\|D_x^{1/p}V^j\right\|^B_{L^p_tL^{q}_x}.
\end{equation}

Next, we observe that if $q-s\leq s$, then for $j=1,\dots,J$, $k\neq j$ we use H\"older's inequality to obtain
\begin{eqnarray}
Ia_{j,k}&=& \left\|    \left| D_x^{1/p} V_n^j\right|^{q-s} \left|D_x^{1/p} V_n^k \right|^{q-s}  \left|D_x^{1/p} V_n^k \right|^{2s-q} \right\|_{L^{p/q}_tL^1_x}^{B/q} \nonumber \\
& \leq & \left\|  \left| D_x^{1/p} V_n^j D_x^{1/p} V_n^k \right|^{q-s}
\right\|_{L^{\frac{p}{2q-2s}}_t L^{\frac{q}{2q-2s}}_x}^{B/q}
\left\|  \left| D_x^{1/p} V_n^k \right|^{2s-q}
\right\|_{L^{\frac{p}{2s-q}}_t L^{\frac{q}{2s-q}}_x}^{B/q} \nonumber \\
& \leq & \left\|   D_x^{1/p} V_n^j D_x^{1/p} V_n^k
\right\|_{L^{\frac{p}{2}}_t L^{\frac{q}{2}}_x}^{B(q-s)/q}
\left\|   D_x^{1/p} V_n^k \right\|_{L^{p}_t L^{q}_t}^{B(2s-q)/q}. \label{d:LDes:eq_Ia_1}
\end{eqnarray}

On the other hand, if $q-s>s$ another application of the H\"older's inequality yields
\begin{eqnarray}
Ia_{j,k}&=& \left\|    \left| D_x^{1/p} V_n^j\right|^{s} \left|D_x^{1/p} V_n^k \right|^{s}  \left|D_x^{1/p} V_n^k \right|^{q-2s} \right\|_{L^{p/q}_xL^1_t}^{B/q} \nonumber  \\
& \leq & \left\|  \left| D_x^{1/p} V_n^j D_x^{1/p} V_n^k \right|^{s}
\right\|_{L^{\frac{p}{2s}}_t L^{\frac{q}{2s}}_x}^{B/q}
\left\|  \left| D_x^{1/p} V_n^k \right|^{q-2s}
\right\|_{L^{\frac{p}{q-2s}}_t L^{\frac{q}{q-2s}}_x}^{B/q} \nonumber \\
& \leq & \left\|   D_x^{1/p} V_n^j D_x^{1/p} V_n^k
\right\|_{L^{\frac{p}{2}}_t L^{\frac{q}{2}}_x}^{Bs/q}
\left\|   D_x^{1/p} V_n^k \right\|_{L^{p}_t L^{q}_t}^{B(q-2s)/r}. \label{d:LDes:eq_Ia_2}
\end{eqnarray}

Now, note that the sum of the terms $Ib$ is over the $s$-tuples $(k_1,\dots, k_s)$ such that at least two $k_i's$ are different (if $s$ are equal to 0 or 1 the result is trivial). Without loss of generality we assume $k_1\neq k_2$.  Since $q/p=2/p+(q-s)/p+(s-2)/p$ and $1=2/q+(q-s)/q+(s-2)/q$, for $j \in\{1,\dots,J\}$ and $(k_1,\dots,k_s)$ with $k_1\neq k_2$, we have by H\"older's inequality
\begin{eqnarray}
Ib_{j,k_1,\dots,k_s}&\leq & \left(\left\| D_x^{1/p} V_n^j \right\|^{q-s}_{L^{p}_tL^{q}_x}
\left\| D_x^{1/p}  V_n^{k_1} D_x^{1/p}  V_n^{k_2} \right\|_{L^{p/2}_tL^{q/2}_x}\right. \nonumber  \\
& & \quad \left. \left\| D_x^{1/p}  V_n^{k_3} \right\|_{L^{p}_tL^{q}_t}   \ldots  \left\|  D_x^{1/p}  V_n^{k_s} \right
\|_{L^{p}_t L^{q}_t} \right)^{B/q} \label{d:LDes:eq_Ib}
\end{eqnarray}

Finally, for $j\in\{1,\dots,J\}$, $i\neq j$ we deduce
\begin{eqnarray}
Ic_{j,i}&\leq & \left\|    \left| D_x^{1/p} V_n^j D_x^{1/p} V_n^i \right|^{q-2-s} \right\|_{L^{\frac{p}{2q-4-2s}}_tL^{\frac{q}{2q-4-2s}}_x}^{B/q} \nonumber \\
&&\quad \left\|    \left| D_x^{1/p} V_n^j \right|^{4+s-q} \left|\sum_{k=1}^J D_x^{1/p} V_n^k \right|^s \right\|_{L^{\frac{p}{4+2s-q}}_tL^{\frac{q}{4+2s-q}}_x}^{B/q} \nonumber \\
&\leq & \left\|     D_x^{1/p} V_n^j D_x^{1/p} V_n^i    \right\|_{L^{\frac{p}{2}}_tL^{\frac{q}{2}}_x}^{B(q-2-s)/q}
\left\| \left| D_x^{1/p} V_n^j  \right|^{4+s-q}   \right\|_{L^{\frac{p}{4+s-q}}_tL^{\frac{q}{4+s-q}}_x}^{B/q}\nonumber \\
&&\quad \left\| \left|\sum_{k=1}^J D_x^{1/p} V_n^k \right|^s \right\|_{L^{p/s}_tL^{q/s}_x}^{B/q} \nonumber \\
 &\leq & \left\|     D_x^{1/p} V_n^j D_x^{1/p} V_n^i    \right\|_{L^{\frac{p}{2}}_tL^{\frac{q}{2}}_x}^{B(q-2-s)/q}
\left\|  D_x^{1/p} V_n^j     \right\|_{L^{P}_tL^{q}_t}^{B(4+s-q)/q}\nonumber \\
 &&\quad \left\| \sum_{k=1}^J D_x^{1/p} V_n^k   \right\|_{L^{P}_tL^{q}_t}^{Bs/q}.\label{d:LDes:eq_Ic}
\end{eqnarray}

Taking the limit $n \rightarrow \infty$ in inequalities \eqref{d:LDes:eq_II}, \eqref{d:LDes:eq_Ia_1}-\eqref{d:LDes:eq_Ic}, using Remark \ref{Vnj}, identity \eqref{Vjn2}, the Strichartz inequality (\ref{STR0}), and the fact that  $\psi^j$ is bounded in $\dot{H}^{s_k}$ by \eqref{PYTHA}, we obtain (\ref{D1}). The above  arguments with straightforward modifications, and inequality (\ref{STR3}) yield (\ref{D2}).
\end{proof}

Next, we obtain the existence of maximizers for the Airy-Strichartz inequalities \eqref{STR0} and \eqref{STR3}, respectively.

\begin{proof}[Proof of Theorem  \ref{Max1}]
For a fixed $k>4$,  let $(u_n)_{n\in \mathbb{N}}\subset \dot{H}_x^{s_k}$ be a sequence satisfying $\|D_x^{s_k}u_n\|_{L^2_x}=1$ and
$$
\lim_{n\rightarrow \infty} \|D^{1/p}_xU(t)u_n \|_{L_t^pL_x^q} = M_k.
$$
Let
$(\psi^j)_{j\in \mathbb{N}}\subset \dot{H}_x^{s_k}$,
$(v_n)_{n\in \mathbb{N}} \subset L_t^pL_x^q $  and
$(h^j_n,x^j_n,t^j_n)_{j\in \mathbb{N}}$ be sequences associated to $(u_n)_{n\in \mathbb{N}}$ according to Theorem \ref{profdec}. Hence, given $\epsilon>0 $
there exists $N_0 \in \mathbb{N}$ such that for $n, J>N_0$
\begin{equation}\label{d:teo:max1:eq1}
M_k-\|D^{1/p}_xU(t)u_n \|_{L_t^pL_x^q} \leq \eps/2, ~\mbox{  }~ \limsup_{n\rightarrow \infty}\|D^{1/p}_xU(t)R^J_n \|_{L_t^pL_x^q} \leq \frac{\eps}{4},
\end{equation}
and  from \eqref{PYTHA}
\begin{equation}\label{d:teo:max1:Lpit}
\sum_{j=1}^{J}  \|\psi^{j}\|_{\dot{H}^{s_k}_x}^2 \leq \|u_n \|_{\dot{H}^{s_k}_x}^2 +\eps \leq 1+\eps.
\end{equation}
Next, recalling the notation \eqref{Not1} and \eqref{Not2}, we observe that for each $J>N_0$ there exists  $N_J  \in \mathbb{N}$ such that if $n>N_J$
\begin{equation}\label{d:teo:max1:eq2}
\|D^{1/p}_xU(t)R^J_n \|_{L_t^pL_x^q} \leq \frac{\eps}{2}
\end{equation}
\begin{equation}\label{d:teo:max1:eq3}
\left\|D_x^{1/p}\left[\sum_{j=1}^{J}V^j_n\right]\right\|^B_{L^p_tL^{q}_x} \leq  \sum_{j=1}^{J}\left\|D_x^{1/p}V^j\right\|^B_{L^p_tL^{q}_x} +\eps.
\end{equation}
Here we have used Lemma \ref{mainlemma} to obtain the last inequality, and $B=\min\{p,q\}$.  Hence,  if $J>N_0$ and $n>N_J$
\begin{eqnarray}
\sum_{j=1}^{J}\left\|D_x^{1/p}V^j\right\|^B_{L^p_tL^{q}_x} &\geq& \left\|D_x^{1/p}\left[\sum_{j=1}^{J}V^j_n\right]\right\|^B_{L^p_tL^{q}_x} -\eps \nonumber \\
& \geq & \left\| D_x^{1/p}U(t)u_n -  D_x^{1/p}U(t)R_n^J(x) \right\|^B_{L^p_tL^{q}_x} -\eps \nonumber \\
& \geq & \left| \left\|D_x^{1/p} U(t)u_n \right\|_{L^p_tL^{q}_x} - \left\| D_x^{1/p} U(t) R_n^J(x) \right\|_{L^p_tL^{q}_x} \right|^B -\eps \nonumber\\
& \geq & ( M_k - \eps )^B -\eps \label{d:teo:max:sum_M_k}
\end{eqnarray}
where we have used  inequalities (\ref{d:teo:max1:eq1})-(\ref{d:teo:max1:eq3}) to obtain the last estimate.

Now, set $j^*\in \{1,..,J\}$ as the smallest index such that 
$$
\|D_x^{1/p}V^{j^*}\|_{L^p_tL^{q}_x}= \max_{1\leq j \leq J}\{\|D_x^{1/p} V^j\|_{L^p_tL^{q}_x}\}.
$$ 
Hence, the
 last inequality, and Strichartz estimate  (\ref{STR0}) yield
\begin{eqnarray}
( M_k - \eps )^B -\eps &\leq & \left\|D_x^{1/p}V^{j^*}\right\|^{B-2}_{L^p_tL^{q}_x}\sum_{j=1}^{J}\left\|D_x^{1/p}V^j\right\|^2_{L^p_tL^{q}_x} \nonumber \\
& \leq & M_k^{B-2} \left( \|\psi^{j^*}\|^2_{\dot{H}^{s_k}_x} \right)^{\frac{B-2}{2}}\sum_{j=1}^{J} \left( M_k \|\psi^{j}\|_{\dot{H}^{s_k}_x} \right)^{2} \nonumber\\
& \leq & M_k^{B-2} \left( \|{\psi^{j^*}}\|^2_{\dot{H}^{s_k}_x} \right)^{\frac{B-2}{2}}
M_k^2  (1+\eps)\nonumber\\
& \leq & M_k^B (1+\eps)^{B/2}. \label{d:teo:max:sand}
\end{eqnarray}
Here we  used  (\ref{d:teo:max1:Lpit}) to obtain the last two inequalities.

 We now claim that $j^*$ is independent of $J$ for sufficiently large $J$ (or  equivalently $\epsilon$ sufficiently  small). Indeed, from (\ref{d:teo:max1:Lpit}) we have
 $$
 \lim_{j\rightarrow \infty }\|\psi^{j}\|_{\dot{H}^{s_k}_x}^2 =0.
 $$
Hence,  from (\ref{STR0}) and the definition of  $V^j$ we obtain
$$
\lim_{j\rightarrow \infty }\left\|D_x^{1/p}V^j\right\|_{L^p_tL^{q}_x} =0,
$$
allowing us to conclude our claim. As a consequence, from (\ref{d:teo:max:sand}) we conclude that for   sufficiently  small $\epsilon$ we have
$$
( M_k - \eps )^B -\eps \leq
M_k^{B} \left( \|\psi^{j^*}\|^2_{\dot{H}^{s_k}_x} \right)^{\frac{B-2}{2}}
  (1+\eps)  \leq  M_k^B (1+\eps)^{B/2}
$$
with $j^*$ independent of $J$ and $\eps$. Hence,  taking $\eps \rightarrow 0$ yields
$$
\|{\psi^{j^*}}\|^2_{\dot{H}^{s_k}_x}=1
$$
 and from (\ref{d:teo:max1:Lpit})  for $j\neq j^* $  we conclude that
$$
\|\psi^{j}\|^2_{\dot{H}^{s_k}_x}=0.
$$
Finally, we observe from (\ref{d:teo:max:sum_M_k}) and Strichartz estimate  (\ref{STR0})  that for sufficiently small $\eps$
$$
 ( M_k - \eps )^B -\eps \leq \sum_{j=1}^{J}\left\|D_x^{1/p}V^j\right\|^B_{L^p_tL^{q}_x} = \left\|D_x^{1/p}V^{j^*}\right\|^B_{L^p_tL^{q}_x}.
$$
Taking $\epsilon \rightarrow 0 $ we conclude that
$$
\left\|D_x^{1/p}U(t)\psi^{j^*}\right\|_{L^p_tL^{q}_x}=  \left\|D_x^{1/p}V^{j^*}\right\|_{L^p_tL^{q}_x} =M_k = M_k \|\psi^{j^*}\|_{\dot{H}^{s_k}_x}.
$$
\end{proof}

\begin{proof}[Proof of Theorem  \ref{Max2}]
This result follows from arguments similar to the ones used in the  proof of Theorem \ref{Max1} and inequalities (\ref{STR3}) and (\ref{D2}).

\end{proof}

\vspace{0.5cm}
\centerline{\textbf{Acknowledgments}}
\vspace{0.5cm}
L. G. F. was partially supported by CNPq/Brazil and FAPEMIG/Brazil. The authors would like to thank the referee for the careful review and the valuable comments which helped to improve the manuscript.\\


\end{document}